\theoremstyle{plain}
\newtheorem{theorem}{Theorem}[section]
\newtheorem*{theorem*}{Theorem}
\newtheorem{lemma}[theorem]{Lemma}
\newtheorem{proposition}[theorem]{Proposition}
\newtheorem{corollary}[theorem]{Corollary}
\theoremstyle{definition}
\newtheorem{example}[theorem]{Example}
\theoremstyle{remark}
\newtheorem{remark}[theorem]{Remark}
\numberwithin{equation}{section}
\newcommand{\C}{\mathbb{C}}
\newcommand{\R}{\mathbb{R}}
\newcommand{\E}{\mathcal{E}}
\newcommand{\V}{\mathbb{V}}
\newcommand{\X}{\mathbf{X}}
\newcommand{\HH}{\mathbb{H}}
\newcommand{\mc}{\mathcal}
\newcommand{\dd}{\mathrm{d}}
\newcommand{\e}{\mathbf{e}}
\newcommand{\End}{\mathrm{End}}
\DeclareMathOperator{\id}{id}
\newcommand{\be}{\begin{equation}}
\newcommand{\ee}{\end{equation}}
\title[The Pestov identity on the frame bundle]{The Pestov identity on the frame bundle and associated homogeneous fibrations}
\author[M. Ceki\'c]{Mihajlo Ceki\'c}
\address{Université Paris-Est Créteil, CNRS, Laboratoire d’analyse et de mathématiques appliquées, 94010, Créteil, France}
\email{mihajlo.cekic@cnrs.fr}
\author[T. Lefeuvre]{Thibault Lefeuvre}
\address{Laboratoire de Mathématiques d’Orsay, Université Paris-Saclay, Bâtiment 307, 91405 Orsay Cedex France}
\email{thibault.lefeuvre1@universite-paris-saclay.fr}
\author[A. Moroianu]{Andrei Moroianu}
\address{Université Paris-Saclay, CNRS,  Laboratoire de mathématiques d'Orsay, 91405, Orsay, France, 
and Institute of Mathematics “Simion Stoilow” of the Romanian Academy, 21 Calea Grivitei, 010702 Bucharest, Romania}
\email{andrei.moroianu@math.cnrs.fr}
\author[U. Semmelmann]{Uwe Semmelmann}
\address{Institut f\"ur Geometrie und Topologie, Fachbereich Mathematik, Universit{\"a}t Stuttgart, Pfaffenwaldring 57, 70569 Stuttgart, Germany}
\email{uwe.semmelmann@mathematik.uni-stuttgart.de}
\begin{document}

\begin{abstract}
In this short note, we prove a global Pestov identity on the (orthonormal) frame bundle of a Riemannian manifold and deduce  similar identities on associated homogeneous fibrations. As a particular example, this provides a concise proof of the Pestov identity on the unit tangent bundle of the manifold.
\end{abstract}

\maketitle

\section{Introduction}

\subsection{Context} The Pestov formula is a powerful energy identity that plays a central role in geometric inverse problems and dynamical rigidity results, particularly on Riemannian manifolds with Anosov geodesic flow \cite{Guillemin-Kazhdan-80, Croke-Sharafutdinov-98, Dairbekov-Sharafutdinov-03, Pestov-Uhlmann-05,Guillarmou-Lefeuvre-18,Cekic-Lefeuvre-Moroianu-Semmelmann-24, Paternain-Salo-Uhlmann-13}. It was first discovered by Mukhometov \cite{Mukhometov-75,Mukhometov-81} and Amirov \cite{Amirov-86}, before being generalized by Pestov and Sharafutdinov \cite{Pestov-Sharafutdinov-88,Sharafutdinov-94}. Later, Knieper \cite{Knieper-02} found an intrinsic formulation. A twisted identity involving an auxiliary vector bundle was also recently obtained by Guillarmou, Paternain, Salo and Uhlmann \cite{Paternain-Salo-Uhlmann-15, Guillarmou-Paternain-Salo-Uhlmann-16}. The usefulness of the Pestov formula comes from the way it links dynamical information (involving the geodesic vector field) and geometric information (curvature, vertical derivatives, etc.). It is also a key ingredient in the proof of the ergodicity of the frame flow \cite{Cekic-Lefeuvre-Moroianu-Semmelmann-22,Cekic-Lefeuvre-Moroianu-Semmelmann-24,Cekic-Lefeuvre-Moroianu-Semmelmann-24-2}. When specialized to functions of a given Fourier degree, the Pestov formula turns out to be equivalent to the Weitzenböck formula on symmetric tensors \cite{Cekic-Lefeuvre-Moroianu-Semmelmann-25}. See also \cite{Paternain-Salo-Uhlmann-23, Lefeuvre-book} for textbook treatments of the Pestov identity and for more references.

\subsection{Content of this note} The purpose of this short note is to establish the Pestov identity on the frame bundle of a Riemannian manifold (Theorem \ref{theorem:global-pestov-frame}), and to show how to derive from it all the known Pestov identities on the unit tangent bundle (Corollary \ref{corollary:associated}). As a remark, we observe that the idea of establishing a Pestov identity on the frame bundle was also considered in \cite{Egidi-16}, although our formula seems to differ slightly from the one in loc. cit.

The note is organized as follows:
\begin{itemize}
\item In \S\ref{section:frame-bundle}, we recall some standard facts about the geometry of the orthonormal frame bundle of a Riemannian manifold.
\item In \S\ref{section:pestov}, we establish the universal Pestov formula on the frame bundle (Theorem \ref{theorem:global-pestov-frame}) and show how to derive a corresponding Pestov identity on any homogeneous fibration associated to the frame bundle (Corollary \ref{corollary:associated}). As a particular example, we recover the usual Pestov identity on the unit tangent bundle.
\item In \S\ref{section:applications}, we derive some applications of this formula on negatively curved manifolds.
\end{itemize}

The usual proofs of the Pestov identity are quite involved (see \cite{Paternain-Salo-Uhlmann-15} or \cite[Chapter 14]{Lefeuvre-book} for instance). However, the proof provided in this paper is of a more conceptual nature, and we believe that it could easily be generalized to other contexts (such as magnetic or thermostat flows), where such an identity does not seem to be available yet. \\

\noindent \textbf{Acknowledgement:} The authors wish to thank the CIRM, where part of this article was written, for support and hospitality. TL is supported by the European Research Council (ERC) under the European Union’s Horizon 2020 research and innovation programme (Grant agreement no. 101162990 — ADG). AM is partially supported by the PNRR-III-C9-2023-I8 grant CF 149/31.07.2023 {\em Conformal Aspects of Geometry and Dynamics}. MC received funding from an Ambizione grant (project number 201806) from the Swiss National Science Foundation.

\section{Frame bundle geometry and structural equations}

\label{section:frame-bundle}

\subsection{Frame bundle geometry} 

\label{ssection:frame-bundle-geometry}

Let $(M,g)$ be a smooth closed connected oriented $n$-dimensional Riemannian manifold with $n \geq 2$. Let $$\pi_{SM} : SM = \{v \in TM \mid g(v, v) = 1\} \to M$$ be the unit tangent bundle, and let $\pi_{FM} : FM \to M$ the principal $\mathrm{SO}(n)$-bundle of oriented orthonormal frames over $M$. A frame $w \in FM$ is an isometry $w : (\R^n,g_{\mathrm{Euc}}) \to (T_xM,g_x)$, where $x := \pi_{FM}(w)$ and $g_{\mathrm{Euc}}$ denotes the Euclidean metric on $\R^n$. The right action $R_a$ of $a \in \mathrm{SO}(n)$ on $FM$ is then given by composition $R_a w = w \circ a$. Let $(\e_1,\dotsc,\e_n)$ denote the canonical basis of $\R^n$.
Note that any frame $w \in FM$ also induces an isometry $w : \Lambda^2 \R^n \to \Lambda^2 T_xM$, where $\Lambda^2 \R^n$ is equipped with the metric making $(\e_i \wedge \e_j)_{1 \leq i < j \leq n}$ into an orthonormal basis. We also emphasize that throughout the paper $1$-forms are identified with vectors via the metric, and $\Lambda^2\R^n$ is identified with skew-symmetric endomorphisms $\mathfrak{so}(n)$ via $\xi \mapsto \left(\theta \mapsto \iota_\theta \xi\right)$, where $\iota_\theta$ denotes contraction with the vector $\theta\in\R^n$. 

The vertical bundle $\V \subset T(FM)$ is defined as
\begin{equation}
\label{equation:vertical}
\V := \ker \dd\pi_{FM}.
\end{equation}
Given a frame $w \in FM$ and $2$-form $\xi \in \Lambda^2 \R^n$, there is a corresponding vertical tangent vector $\xi^{\V} \in \V_w$ defined as
\begin{equation}
\label{equation:vertical-lift}
\xi^{\V} =  \dfrac{\dd}{\dd t}|_{t=0} (w \circ e^{t\xi}) \in \V_w.
\end{equation}
The \emph{fundamental} vector fields are the vertical vector fields $Y_\xi(w) := \xi^\V$, where $\xi \in \Lambda^2 \R^n$. Given $(\omega_\alpha)_{\alpha}$, a basis of $\Lambda^2 \R^n$, this yields a family $(Y_{\omega_\alpha})_{\alpha}$ of fundamental vertical vector fields $Y_{\omega_\alpha} \in C^\infty(FM,\V)$ spanning $\V$ at every point $w \in FM$ of the frame bundle. 

We let $\HH \subset T(FM)$ be the horizontal bundle induced by the Levi-Civita connection. Note that $T(FM)$ splits into $T(FM) = \HH \oplus \V$. For any $w \in FM$, $\dd\pi_{FM}(w) : \HH_w \to T_{\pi_{FM}(w)}M$ is an isomorphism; given $Z \in T_{\pi_{FM}(w)}M$, we denote by $Z^{\HH} \in \HH_w$ its \emph{horizontal lift} as the unique horizontal vector such that $\dd\pi_{FM}(w)Z^{\HH} = Z$. 
For every $\theta\in\R^n$, the \emph{standard} vector field associated to $\theta$ is the horizontal vector field defined as $B_\theta(w) := w(\theta)^{\HH} \in \HH_w$. Similarly as for $\V$, denoting by $(\e_1,\ldots,\e_n)$ the standard orthonormal basis of $\R^n$, the family of standard vector fields $(B_{\e_i})_{1 \leq i \leq n}$ trivializes $\HH$ over $FM$.

The Riemann curvature tensor of $g$ is initially defined as the $(1,3)$-tensor $\mc{R}(X,Y) := \nabla_X\nabla_Y - \nabla_Y\nabla_X-\nabla_{[X,Y]}$, where $\nabla$ denotes the Levi-Civita connection of $g$; it can be identified with a symmetric endomorphism of $\Lambda^2 TM$ using its symmetries. Namely
\begin{equation}
    \label{equation:utile}
    \langle \mc{R}(X \wedge Y), W \wedge Z\rangle := \langle\mc{R}(X,Y)W, Z\rangle,
\end{equation}
where the scalar product on the left is the one induced by the metric on $2$-forms. With this convention, $\mc{R}$ is equal to the identity $\mathrm{Id}_{\Lambda^2 TM}$ when $(M,g)$ is the real hyperbolic space.

\begin{lemma}
For all $\xi,\xi' \in \Lambda^2 \R^n$, $\theta,\theta' \in \R^n$ and $w \in FM$ we have:
\begin{equation}
\label{equation:commutateur}
 [Y_\xi,Y_{\xi'}]= Y_{[\xi,\xi']}, \quad [Y_\xi,B_\theta ] = B_{\xi\theta}, \quad [B_{\theta},B_{\theta'}] = - (w^{-1}\mc{R}_{\pi_{FM}(w)}(w(\theta) \wedge w(\theta')))^{\V},
\end{equation}
where $\xi\theta$ is the vector obtained by applying $\xi$ (as an endomorphism of $\R^n$) to $\theta$.
\end{lemma}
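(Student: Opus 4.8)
The plan is to compute each of the three brackets by using the defining formulas for the fundamental and standard vector fields and the standard structure-equation machinery for principal bundles. Throughout I would work at a fixed frame $w \in FM$ and exploit the fact that $Y_\xi$ is the infinitesimal generator of the right $\mathrm{SO}(n)$-action, while $B_\theta$ is horizontal and projects to a vector downstairs.

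For the first identity $[Y_\xi, Y_{\xi'}] = Y_{[\xi,\xi']}$, I would use that $\xi \mapsto Y_\xi$ is the fundamental-vector-field map of the principal action. By \eqref{equation:vertical-lift}, $Y_\xi(w) = \tfrac{\dd}{\dd t}|_{t=0}(w \circ e^{t\xi})$, so $\xi \mapsto Y_\xi$ is a Lie algebra anti/homomorphism from $(\Lambda^2\R^n \cong \mathfrak{so}(n), [\cdot,\cdot])$ into vector fields on $FM$; the sign conventions are fixed by the right action, and one checks directly that the bracket of two such generators is the generator of the bracket, giving $Y_{[\xi,\xi']}$. The key input is simply the naturality of fundamental vector fields for a right action together with the convention identifying $\Lambda^2\R^n$ with $\mathfrak{so}(n)$ announced in \S\ref{ssection:frame-bundle-geometry}.

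For the mixed bracket $[Y_\xi, B_\theta] = B_{\xi\theta}$, I would compute the Lie derivative of $B_\theta$ along the flow of $Y_\xi$, namely $t \mapsto R_{e^{t\xi}}$. The flow is the right action by $e^{t\xi}$, so I need the behaviour of the standard vector field under $R_a$. Since $B_\theta(w) = w(\theta)^{\HH}$ and horizontal lifts are $\mathrm{SO}(n)$-equivariant for the Levi-Civita connection, one obtains $(R_a)_* B_\theta = B_{a^{-1}\theta}$; differentiating at $a = e^{t\xi}$, $t=0$, produces $[Y_\xi, B_\theta] = B_{\xi\theta}$, where the sign again matches the convention that $\xi$ acts as the endomorphism $\theta \mapsto \xi\theta$. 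The main point here is the equivariance of the connection, i.e. that parallel transport commutes with the right action of the structure group.

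For the curvature bracket $[B_\theta, B_{\theta'}]$, which I expect to be \emph{the main obstacle}, I would use the structure equation of the connection: the horizontal distribution $\HH$ fails to be integrable exactly by the curvature $2$-form $\Omega$, so the vertical part of $[B_\theta, B_{\theta'}]$ is the fundamental vector field associated to $-\Omega(B_\theta, B_{\theta'})$, while the horizontal part vanishes because the standard fields $B_{\e_i}$ trivialize $\HH$ with constant coefficients along the fibre and the projections $\dd\pi_{FM}(B_\theta)=w(\theta)$ have vanishing bracket downstairs for a torsion-free connection. Translating $\Omega(B_\theta, B_{\theta'})$ through the frame $w$ into the curvature tensor $\mc{R}$ via the identification \eqref{equation:utile}, and taking care of the two pullbacks by $w$ (once to send $\theta,\theta'$ to $w(\theta),w(\theta')\in T_xM$, once to send the resulting $2$-form back to $\Lambda^2\R^n$ before applying $(\cdot)^{\V}$), yields exactly $[B_\theta, B_{\theta'}] = -\bigl(w^{-1}\mc{R}_{\pi_{FM}(w)}(w(\theta)\wedge w(\theta'))\bigr)^{\V}$. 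The delicate bookkeeping is matching the sign and the precise placement of $w$ and $w^{-1}$ so that the curvature operator $\mc{R}$ enters with the normalization \eqref{equation:utile}; I would verify this by evaluating both sides on the real hyperbolic space, where $\mc{R} = \mathrm{Id}_{\Lambda^2 TM}$, as a consistency check.
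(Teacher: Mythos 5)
Your sketch is correct in substance, but you should know that the paper does not actually write out a proof of this lemma: its ``proof'' is a citation to the standard references (\cite[Chapter 3, Section 5]{Kobayashi-Nomizu-96} and equations (15), (16), (24) of \cite{Cleyton-Moroianu-Semmelmann-21}). What you have reconstructed is precisely the standard argument contained in those references: the fundamental-vector-field map of a \emph{right} action is a Lie algebra homomorphism (for a left action it would be an anti-homomorphism, which is why your hedge ``anti/homomorphism'' resolves in your favor here); the equivariance $(R_a)_* B_\theta = B_{a^{-1}\theta}$ of the standard fields, which follows from the $\mathrm{SO}(n)$-invariance of $\HH$, gives the mixed bracket after differentiating along $a = e^{t\xi}$; and the structure equations give the third bracket. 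So your route and the paper's source coincide; the only difference is that you supply the argument the paper outsources.

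One step needs repair, however. For the third bracket you justify the vanishing of the horizontal part by saying that the projections $\dd\pi_{FM}(B_\theta) = w(\theta)$ ``have vanishing bracket downstairs.'' This has no literal meaning: $B_\theta$ is not $\pi_{FM}$-related to any vector field on $M$, because the value $w(\theta)$ depends on the frame $w$ and not only on the base point $\pi_{FM}(w)$. The correct mechanism, which does use torsion-freeness as you anticipated, goes through the solder form $\vartheta_w(Z) := w^{-1}\bigl(\dd\pi_{FM}(Z)\bigr) \in \R^n$. Since $\vartheta(B_\theta) \equiv \theta$ and $\vartheta(B_{\theta'}) \equiv \theta'$ are constant functions on $FM$, one has $\dd\vartheta(B_\theta,B_{\theta'}) = -\vartheta\bigl([B_\theta,B_{\theta'}]\bigr)$; the first structure equation identifies $\dd\vartheta(B_\theta,B_{\theta'})$ with the torsion form evaluated on these horizontal vectors (the connection-form term drops out on horizontal arguments), and this vanishes because the Levi-Civita connection is torsion-free. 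Hence $\vartheta([B_\theta,B_{\theta'}]) = 0$, i.e.\ the bracket is vertical. With that fixed, your remaining computation --- the vertical part is the fundamental vector field of $-\Omega(B_\theta,B_{\theta'})$ by the second structure equation, and $\Omega(B_\theta,B_{\theta'})(w) = w^{-1}\mc{R}_{\pi_{FM}(w)}(w(\theta)\wedge w(\theta'))$ under the identification of $\mathfrak{so}(n)$ with $\Lambda^2\R^n$ --- yields exactly the stated formula, and checking the sign against the normalization \eqref{equation:utile} on hyperbolic space, where $\mc{R} = \mathrm{Id}_{\Lambda^2 TM}$, is a sensible way to pin down the conventions.
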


\begin{proof}These identities are standard, see \cite[Chapter 3, Section 5]{Kobayashi-Nomizu-96} or \cite[Equation (15)]{Cleyton-Moroianu-Semmelmann-21} for the first identity in \eqref{equation:commutateur}, \cite[Equation (16)]{Cleyton-Moroianu-Semmelmann-21} for the second, and \cite[Equation (24)]{Cleyton-Moroianu-Semmelmann-21} for the third. \end{proof}

Finally, if $(\e_i)_{1 \leq i \leq n}$ denotes as before the standard basis of $\R^n$, and $(\omega_{\alpha})_{\alpha}$ is an orthonormal frame of $\Lambda^2 \R^n$, we can equip $FM$ with the Riemannian metric $G_{FM}$ turning $(B_{\e_i}, Y_{\omega_{\alpha}})_{1 \leq i \leq n, 1\leq\alpha\leq\frac{n(n-1)}2}$ into an orthonormal basis. The projection $\pi_{FM} : (FM,G_{FM}) \to (M,g)$ is then a Riemannian submersion. The \emph{Liouville measure} $\mu$ on $FM$ is defined as the Riemannian volume element associated with the metric $G_{FM}$.

\subsection{Unit tangent bundle geometry}

\label{ssection:unit-tangent-bundle}

The tangent bundle of $SM$, the unit tangent bundle, splits as
\[
T(SM) = \HH \oplus \V
\]
where $\V: = \ker \dd \pi_{SM}$ and $\HH$ is the horizontal bundle induced by the Levi-Civita connection. When the context is clear, we use the same letters for the vertical and horizontal bundles on $SM$ and $FM$; later, we will add a subscript to avoid confusion. The map $\dd \pi_{SM} : \HH_{(x,v)} \to T_xM$ is an isomorphism.

We define the \emph{connection map} $\mathcal{K}_{(x,v)} : T_{(x,v)}(SM) \to T_xM$ as
\[
\mathcal{K}_{(x,v)}(\zeta) :=  \dfrac{\dd}{\dd t}|_{t=0}(\tau_{x(t) \to x(0)} v(t)) \in v^\perp,
\]
for every $\zeta \in T_{(x,v)}(SM)$, where $t \mapsto z(t) = (x(t),v(t)) \in SM$ is a path such that $\dot{z}(0)=\zeta$ and $\tau_{x(t)\to x(0)}$ denotes the parallel transport along the path $[0,t] \ni s \mapsto x(t-s)$ with respect to the Levi-Civita connection. The map $\mathcal{K}_{(x,v)} : \V_{(x,v)} \to v^\perp \subset T_xM$ is an isomorphism (given by the identification of $\V_{(x, v)}$ with the tangent space to the sphere $S_xM \subset T_xM$ at $v$ which is further naturally identified with $v^\perp$) and $\ker \mathcal{K}_{(x,v)}=\HH_{(x,v)}$. See \cite[Lemma 1.13]{Paternain-99} for further details. 

\subsection{Geodesic and frame flows}

The geodesic flow $(\varphi_t)_{t \in \R}$ on $SM$ is defined as follows: given $v \in SM$, $x := \pi_{SM}(v)$,  we let $\R \ni t \mapsto \gamma_{x,v}(t) \in M$ be the geodesic generated by $\gamma_{x,v}(0)=x$, $\dot{\gamma}_{x,v}(0)=v$; then
\begin{equation}
\label{equation:geodesic-flow}
\varphi_t(v) := (\gamma_{x,v}(t),\dot{\gamma}_{x,v}(t)).
\end{equation}
We denote by $\X_{SM}$ the vector field generating the geodesic flow.

The frame flow $(\Phi_t)_{t \in \R}$ on the frame bundle $FM$ is then defined as follows: given an orthonormal frame $w=(v,e_2,\ldots,e_n)$, at $x := \pi_{FM}w$, we set
\begin{equation}
\label{equation:frame-flow}
\Phi_t(w) := (\varphi_t(v), \tau_{\gamma_{x,v}(t)} e_2, \ldots, \tau_{\gamma_{x,v}(t)} e_n) = (\gamma_{x, v}(t), \tau_{\gamma_{x, v}(t)}w),
\end{equation}
where $\tau_{\gamma_{x,v}(t)}$ denotes the parallel transport along the geodesic segment $(\gamma_{x,v}(s))_{s \in [0,t]}$ with respect to the Levi-Civita connection of $g$. We denote by $\X $ the vector field generating the frame flow. Note that, by definition, $\X  = B_{\e_1}$, where $\e_1$ is the first vector of the standard basis of $\R^n$. We will denote by the same symbol the corresponding operator $\X$ acting on vector-valued functions on $FM$.

\subsection{Structural equations}

The \emph{vertical gradient} is defined as
\begin{equation}
\label{equation:nablav}
\nabla_{\V} : C^\infty(FM) \to C^\infty(FM,\Lambda^2 \R^n), \qquad \nabla_{\V} f = \sum_{\alpha} Y_{\omega_\alpha} f~\omega_\alpha.
\end{equation}
The \emph{horizontal gradient} is then defined as the commutator of $\nabla_{\V}$ with $\X$, namely
\begin{equation}
\label{equation:nablah}
\begin{split}
\nabla_{\HH} : C^\infty(FM) \to C^\infty(FM,\Lambda^2 \R^n), \qquad \nabla_{\HH} := - [\X,\nabla_{\V}],
\end{split}
\end{equation}
We now wish to express explicitly \eqref{equation:nablah}: 

\begin{lemma}[Expression for the horizontal gradient]
\label{lemma:expression-nablah}
Let $(\e_1,\ldots,\e_n)$ be the standard basis of $\R^n$. Then:
\begin{equation}
\label{equation:expression-nablah}
\nabla_{\HH} = \sum_{j=2}^n B_{\e_j}~ \e_1 \wedge \e_j
\end{equation}
\end{lemma}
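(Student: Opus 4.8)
The plan is to unwind the definition $\nabla_{\HH} = -[\X,\nabla_{\V}]$ directly, feeding in $\X = B_{\e_1}$ and the bracket relations \eqref{equation:commutateur}. The computation has essentially no moving parts beyond those relations, so the work is to organize the bookkeeping cleanly. Throughout I interpret $\X$ acting on $\Lambda^2\R^n$-valued functions as differentiation of the coefficients (the fibre $\Lambda^2\R^n$ being a fixed vector space), which is the meaning attached to the operator on the trivial bundle $FM\times\Lambda^2\R^n$.

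First I would expand the commutator. Since the basis vectors $\omega_\alpha$ are constant and $\X$ acts coordinate-wise, for any $f\in C^\infty(FM)$ one has
\[
[\X,\nabla_{\V}]f = \sum_\alpha\bigl(B_{\e_1}Y_{\omega_\alpha}f - Y_{\omega_\alpha}B_{\e_1}f\bigr)\,\omega_\alpha = \sum_\alpha\bigl([B_{\e_1},Y_{\omega_\alpha}]f\bigr)\,\omega_\alpha,
\]
which turns the operator commutator into the Lie bracket of the vector fields $B_{\e_1}$ and $Y_{\omega_\alpha}$. Next I would invoke the second identity in \eqref{equation:commutateur}, namely $[Y_\xi,B_\theta]=B_{\xi\theta}$, with $\xi=\omega_\alpha$ and $\theta=\e_1$, to get $[B_{\e_1},Y_{\omega_\alpha}] = -B_{\omega_\alpha\e_1}$. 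Combining with the sign in the definition of $\nabla_{\HH}$ yields
\[
\nabla_{\HH}f = \sum_\alpha (B_{\omega_\alpha\e_1}f)\,\omega_\alpha .
\]

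It remains to identify the surviving terms. The right-hand side is independent of the chosen orthonormal basis, so I would specialize to $(\omega_\alpha)_\alpha = (\e_i\wedge\e_j)_{1\le i<j\le n}$. Using the identification of $\Lambda^2\R^n$ with $\mathfrak{so}(n)$ via $\xi\mapsto(\theta\mapsto\iota_\theta\xi)$, the contraction convention gives $(\e_i\wedge\e_j)\e_1 = \delta_{i1}\e_j - \delta_{j1}\e_i$; for $i<j$ this vanishes unless $i=1$, in which case it equals $\e_j$. Summing over the surviving indices $j=2,\dots,n$ then produces exactly $\sum_{j=2}^n B_{\e_j}\,\e_1\wedge\e_j$, as claimed.

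The only delicate points are purely a matter of signs and conventions: tracking that the minus from $[B_{\e_1},Y_{\omega_\alpha}]=-[Y_{\omega_\alpha},B_{\e_1}]$ cancels the minus in $\nabla_{\HH}=-[\X,\nabla_{\V}]$, and checking that the prescribed identification of $\Lambda^2\R^n$ with skew-symmetric endomorphisms gives $(\e_i\wedge\e_j)\e_1$ the stated value. I do not anticipate any genuine obstacle; the entire content is carried by the commutator formula \eqref{equation:commutateur}, which we may assume.
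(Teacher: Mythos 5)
Your proof is correct and follows essentially the same route as the paper's: expand $-[\X,\nabla_{\V}]$ using the constancy of the basis sections $\omega_\alpha$, apply the bracket relation $[Y_\xi,B_\theta]=B_{\xi\theta}$ with $\theta=\e_1$, and specialize to the basis $(\e_i\wedge\e_j)_{i<j}$. Your explicit verification that $(\e_i\wedge\e_j)\e_1=\delta_{i1}\e_j-\delta_{j1}\e_i$ under the contraction convention is a detail the paper leaves implicit, but the argument is identical.
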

The expression \eqref{equation:expression-nablah} should be understood as follows: for all $f \in C^\infty(FM)$, $\nabla_{\HH} f = \sum_{j=2}^n B_{\e_j} f ~\e_1 \wedge \e_j$.

\begin{proof}
First, taking $\theta := \e_1$ in \eqref{equation:commutateur} and using $\X  = B_{\e_1}$, we get $[\X ,Y_\xi] = - B_{\xi \e_1}$. Now, if $(\omega_\alpha)_{\alpha}$ is a basis of $\Lambda^2 \R^n$, the sections $\omega_\alpha \in C^\infty(FM,\Lambda^2 \R^n)$ are constant, so we get from \eqref{equation:commutateur} and \eqref{equation:nablav}:
\[
\nabla_{\HH}f = -[\X,\nabla_{\V}] f = -\sum_{\alpha} [\X ,Y_{\omega_\alpha}] f~\omega_\alpha =\sum_{\alpha} B_{\omega_\alpha \e_1}f~\omega_\alpha.
\]
Taking the specific basis $\omega_{\alpha} = \e_i \wedge \e_j$ for $\alpha=(i,j)$ with $i < j$, we obtain the claimed result.
\end{proof}

We now compute the commutator $[\X,\nabla_{\HH}]$. For that purpose, we introduce the endomorphism $R_{FM} \in C^\infty(FM,\End(\Lambda^2\R^n))$ defined for $w \in FM $ and $\xi, \xi' \in \Lambda^2 \R^n$ by
\begin{equation}
\label{equation:curvature}
\langle  R_{FM}(w)\xi, \xi' \rangle  :=  \langle \xi, w^{-1}\mc{R}_{x}(w(\xi'\e_1) \wedge w(\e_1)) \rangle_{\Lambda^2 \R^n},
\end{equation}
where $x:=\pi_{FM}(w)$, and $\mc{R} \in C^\infty(M,S^2\Lambda^2TM)$ is the Riemann curvature tensor  (see \eqref{equation:utile}).

\begin{lemma}[Commutation relation for the horizontal gradient]
The following holds:
\begin{equation}
\label{equation:commutator-nablah}
[\X,\nabla_{\HH}] =  R_{FM} \nabla_{\V}.
\end{equation}
\end{lemma}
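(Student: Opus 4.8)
The plan is to evaluate both sides on an arbitrary $f \in C^\infty(FM)$ and to match them componentwise in the orthonormal basis $(\e_i \wedge \e_j)_{1 \le i < j \le n}$ of $\Lambda^2 \R^n$. Since $\X = B_{\e_1}$ is horizontal and the sections $\e_1 \wedge \e_j$ are constant on the trivial bundle $FM \times \Lambda^2\R^n$, the operator $\X$ acts coefficientwise. Using the expression $\nabla_{\HH} = \sum_{j=2}^n B_{\e_j}~\e_1 \wedge \e_j$ from Lemma \ref{lemma:expression-nablah}, I would first reduce everything to horizontal commutators:
\begin{equation*}
[\X,\nabla_{\HH}]f = \X(\nabla_{\HH}f) - \nabla_{\HH}(\X f) = \sum_{j=2}^n \left(B_{\e_1}B_{\e_j} - B_{\e_j}B_{\e_1}\right)f~\e_1 \wedge \e_j = \sum_{j=2}^n [B_{\e_1},B_{\e_j}]f~\e_1 \wedge \e_j.
\end{equation*}

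Next I would insert the third structural equation of \eqref{equation:commutateur}, which reads $[B_{\e_1},B_{\e_j}] = -\xi_j^{\V}$ at the point $w$, where $\xi_j := w^{-1}\mc{R}_{x}(w(\e_1) \wedge w(\e_j)) \in \Lambda^2\R^n$ and $x = \pi_{FM}(w)$. The key bookkeeping point is that, although $\xi_j$ depends on $w$ (so $\xi_j^{\V}$ is not a genuine fundamental vector field), its action on $f$ at $w$ is still the directional derivative along the vertical lift $\xi_j^{\V}$; by linearity of $\xi \mapsto Y_\xi$ together with the definition \eqref{equation:nablav} of the vertical gradient, one has the identity $\xi^{\V}f = \langle \nabla_{\V}f, \xi\rangle$ for every $\xi \in \Lambda^2\R^n$. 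This gives
\begin{equation*}
[\X,\nabla_{\HH}]f = -\sum_{j=2}^n \langle \nabla_{\V}f, \xi_j\rangle~\e_1 \wedge \e_j.
\end{equation*}

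It then remains to recognize the right-hand side as $R_{FM}\nabla_{\V}f$, and for this I would unwind the definition \eqref{equation:curvature}. Writing $u := \nabla_{\V}f$, I pair $R_{FM}(w)u$ against the basis elements: using the identification of $\Lambda^2\R^n$ with $\mathfrak{so}(n)$, the relevant contractions are $(\e_1 \wedge \e_j)\e_1 = \e_j$ and $(\e_i \wedge \e_j)\e_1 = 0$ for $2 \le i < j$. Consequently $\langle R_{FM}(w)u, \e_i\wedge\e_j\rangle = 0$ whenever $i,j \ge 2$, so that $R_{FM}(w)u$ only has components along the $\e_1 \wedge \e_j$; and for those, the antisymmetry of the wedge yields $\langle R_{FM}(w)u, \e_1 \wedge \e_j\rangle = \langle u, w^{-1}\mc{R}_{x}(w(\e_j)\wedge w(\e_1))\rangle = -\langle u, \xi_j\rangle$. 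Assembling these components gives $R_{FM}(w)\nabla_{\V}f = -\sum_{j=2}^n \langle \nabla_{\V}f, \xi_j\rangle~\e_1\wedge\e_j$, which coincides with the expression obtained above.

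The two commutator computations are routine once the coefficientwise action of $\X$ is noted. The one step requiring genuine care is the middle one, namely correctly interpreting the point-dependent vertical vector $\xi_j^{\V}$ as a vertical derivative and matching the index conventions in \eqref{equation:curvature} — in particular the placement of $\e_1$ inside both the contraction $\xi'\e_1$ and the second slot of the curvature — so that precisely the family of $\e_1 \wedge \e_j$ components survives and lines up on both sides, with the sign coming from the antisymmetry of the wedge.
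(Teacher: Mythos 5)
Your proof is correct and follows essentially the same route as the paper's: both evaluate the commutator on a test function using Lemma \ref{lemma:expression-nablah} and the structural equation \eqref{equation:commutateur}, expand the resulting point-dependent vertical derivative via the vertical gradient, and match this against $R_{FM}\nabla_{\V}f$ by noting that only the $\e_1\wedge\e_j$ components survive in \eqref{equation:curvature}. The only difference is organizational (the paper expands $R_{FM}\nabla_{\V}f$ first and then the commutator), so there is nothing substantive to add.
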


\begin{proof}
Let $(\e_1,\ldots,\e_n)$ be the standard basis of $\R^n$ and set $Y_{ij} := Y_{\e_i \wedge \e_j}$. Let $f \in C^\infty(FM)$, let $w \in FM$, and write $v := w(\e_1)$. Using \eqref{equation:curvature}, we have:
\[
\begin{split}
R_{FM}(w) \nabla_{\V} f & = R_{FM}(w)\left(\sum_{i< j} Y_{ij} f~\e_i \wedge \e_j \right) \\
& = \sum_{i <j}\sum_{i' < j'} \langle Y_{ij} f~ \e_i \wedge \e_j, w^{-1}\mc{R}_{\pi(w)}(w(\e_{i'} \wedge \e_{j'})v \wedge v)\rangle ~\e_{i'} \wedge \e_{j'}  \\
& = \sum_{i < j}\sum_{2 \leq j' \leq n} Y_{ij}f \langle  \e_i \wedge \e_j, w^{-1} \mc{R}_{\pi(w)}(w(\e_{j'}) \wedge v)\rangle ~\e_{1} \wedge \e_{j'}.
\end{split}
\]
On the other hand, we have:
\[
\begin{split}
[\X,\nabla_{\HH}] f(w) &= \sum_{j'=2}^{n} [\X ,B_{\e_{j'}}] f~\e_1 \wedge \e_{j'} = \sum_{j'=2}^{n} [B_{\e_1},B_{\e_{j'}}] f~\e_1 \wedge \e_{j'} \\
& = - \sum_{j'=2}^{n} (w^{-1}\mc{R}_{\pi_{FM}(w)}(v\wedge w(\e_{j'})))^{\V}f~ \e_1 \wedge \e_{j'} \\
& = - \sum_{i < j}\sum_{2 \leq j' \leq n} Y_{ij}f \langle \e_i \wedge \e_j, w^{-1}\mc{R}_{\pi_{FM}(w)}(v\wedge w(\e_{j'})) \rangle\e_1 \wedge \e_{j'} \\
& = R_{FM}(w) \nabla_{\V} f,
\end{split}
\]
where we used \eqref{equation:expression-nablah} in the first equality and \eqref{equation:commutateur} in the third. This concludes the proof.
\end{proof}

\begin{example}
\label{example:curvature}
Assume that $(M^n,g)$ is hyperbolic, i.e. of constant sectional curvature $-1$. Then for all $w \in FM$ and $\xi ,\xi' \in \Lambda^2 \R^n$, $\langle R_{FM}(w) \xi , \xi' \rangle_{\Lambda^2 \R^n} = - \langle \xi  \e_1, \xi'\e_1 \rangle$. Indeed, the Riemann curvature tensor acts as $\mc{R} = \mathrm{Id}_{\Lambda^2 TM}$ in constant curvature $-1$ and thus
\[
\begin{split}
\langle R_{FM}(w)\xi, \xi' \rangle 
& = \langle \xi, w^{-1}\mc{R}_{x}(w(\xi'\e_1) \wedge w(\e_1)) \rangle_{\Lambda^2 \R^n} \\
& = \langle w(\xi), w(\xi'\e_1) \wedge w(\e_1) \rangle = \langle \xi, \xi'\e_1 \wedge \e_1 \rangle = - \langle \xi\e_1, \xi'\e_1 \rangle.
\end{split} 
\]
\end{example}
\medskip

The formal adjoints of $\nabla_{\V}$ and $\nabla_{\HH}$ are respectively the first order differential operators 
\[
\nabla^*_{\V}, \, \nabla^*_{\HH}  : C^\infty(FM,\Lambda^2 \R^n) \to C^\infty(FM).
\]
Observe that for a general operator $H : C^\infty(FM) \to C^\infty(FM,\Lambda^2\R^n)$ of the form $Hf := \sum_\alpha H_\alpha f~\omega_\alpha$ (where $(\omega_\alpha)_{\alpha}$ is an orthonormal basis of $\Lambda^2 \R^n$), if the vector fields $H_\alpha \in C^\infty(FM,T(FM))$ preserve the Liouville measure $\mu$ on $FM$, then its formal adjoint is given by
\[
H^*(\sum_\alpha f_\alpha \omega_\alpha) = - \sum_{\alpha} H_\alpha f_\alpha.
\]
From this remark, and as $(Y_{ij})_{ij}$ and $(B_{\e_j})_j$ preserve $\mu$, we infer:
\begin{equation}
\label{equation:divergence}
\nabla_{\V}^*(\sum_{i < j} f_{ij}~\e_i \wedge \e_j) = - \sum_{i < j} Y_{ij}f_{ij}, \qquad \nabla^*_{\HH}(\sum_{i < j} f_{ij}~\e_i \wedge \e_j) = - \sum_{j=2}^n B_{\e_j}f_{1j}.
\end{equation}

The commutation relations for $\X,\nabla_{\V},\nabla_{\HH}$ then become (note that $\X^* = -\X$):
\begin{equation}
\label{equation:divergence-commutators}
[\X,\nabla^*_{\V}] = - \nabla^*_{\HH}, \qquad [\X,\nabla^*_{\HH}] = \nabla^*_{\V} R^*_{FM}.
\end{equation}

\begin{lemma}
\label{lemma:formule-chiante}
The following relation holds:
\begin{equation}
\label{equation:formule-chiante}
 \nabla^*_{\HH}\nabla_{\V} -\nabla_{\V}^* \nabla_{\HH} = -(n-1)\X .
 \end{equation}
\end{lemma}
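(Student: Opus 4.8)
The plan is to reduce both compositions to the explicit formulas already at hand and then recognize the difference as a sum of commutators of fundamental and standard vector fields. First I would compute the two operators separately. Since $\nabla_{\V} f = \sum_{i<j} Y_{ij} f~\e_i \wedge \e_j$ by \eqref{equation:nablav}, only the components along $\e_1 \wedge \e_j$ (with $j \geq 2$) are seen by $\nabla^*_{\HH}$, so the adjoint formula \eqref{equation:divergence} gives $\nabla^*_{\HH}\nabla_{\V} f = -\sum_{j=2}^n B_{\e_j} Y_{1j} f$. Dually, since $\nabla_{\HH} f = \sum_{j=2}^n B_{\e_j} f~\e_1 \wedge \e_j$ by \eqref{equation:expression-nablah} already has components only along $\e_1 \wedge \e_j$, applying $\nabla^*_{\V}$ yields $\nabla^*_{\V}\nabla_{\HH} f = -\sum_{j=2}^n Y_{1j} B_{\e_j} f$.

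Subtracting, the two terms recombine into a commutator for each index $j$, so that $\nabla^*_{\HH}\nabla_{\V} - \nabla^*_{\V}\nabla_{\HH} = \sum_{j=2}^n [Y_{1j}, B_{\e_j}]$. From here the computation becomes purely algebraic and I would invoke the structural relation $[Y_\xi, B_\theta] = B_{\xi\theta}$ of \eqref{equation:commutateur}, specialized to $\xi = \e_1 \wedge \e_j$ and $\theta = \e_j$.

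The only delicate point is the sign of the action of $\e_1 \wedge \e_j$, viewed as a skew-symmetric endomorphism, on the vector $\e_j$. Using the identification $\xi \mapsto (\theta \mapsto \iota_\theta \xi)$ fixed in \S\ref{ssection:frame-bundle-geometry}, one finds $(\e_1 \wedge \e_j)\e_j = \iota_{\e_j}(\e_1 \wedge \e_j) = -\e_1$. Hence $[Y_{1j}, B_{\e_j}] = B_{-\e_1} = -B_{\e_1} = -\X$ for every $j \in \{2,\ldots,n\}$, and summing these $n-1$ identical contributions produces $-(n-1)\X$, as claimed.

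I expect the main (though modest) obstacle to be bookkeeping rather than conceptual: one must be careful that when composing an operator with an adjoint only the correct index pair $(1,j)$ contributes, and one must track the sign in the $\Lambda^2 \R^n \cong \mathfrak{so}(n)$ identification carefully, so that the commutator evaluates to $-\X$ and the final coefficient is $-(n-1)$ rather than $+(n-1)$.
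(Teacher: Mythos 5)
Your proof is correct and follows essentially the same route as the paper: reduce both compositions to explicit sums via \eqref{equation:divergence}, recognize the difference as $\sum_{j=2}^n [Y_{1j},B_{\e_j}]$, and evaluate each commutator via \eqref{equation:commutateur} using $(\e_1\wedge\e_j)\e_j = \iota_{\e_j}(\e_1\wedge\e_j) = -\e_1$. The sign bookkeeping you flag as the delicate point is handled correctly, so nothing is missing.
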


\begin{proof}
We have:
\[
\begin{split}
\nabla^*_{\HH}\nabla_{\V} -\nabla_{\V}^* \nabla_{\HH} & = \sum_{j=2}^n [Y_{1j},B_{\e_j}]   = \sum_{j=2}^n B_{(\e_1 \wedge \e_j) \e_j}   = - \sum_{j=2}^n B_{\e_1} = -(n-1) \X  ,
\end{split}
\]
where the first equality follows from \eqref{equation:divergence}, the second from \eqref{equation:commutateur}, and the last one uses $\X  = B_{\e_1}$.
\end{proof}

\section{Pestov identities}

\label{section:pestov}

\subsection{Universal Pestov identity on the frame bundle}

In this paragraph, we prove the following identity, which we call \emph{universal Pestov identity} on the frame bundle. 

\begin{theorem}[Universal Pestov identity]
\label{theorem:global-pestov-frame}
For all $u \in C^\infty(FM)$,
\begin{equation}
\label{equation:global-pestov-frame}
\begin{split}
 \|\nabla_{\V} \X u\|^2_{L^2(FM, \Lambda^2 \mathbb{R}^n)} - \|\X \nabla_{\V} u\|^2_{L^2(FM, \Lambda^2\mathbb{R}^n)}&\\ 
 &\hspace{-4cm}= (n-1)\|\X u\|^2_{L^2(FM)} - \langle R_{FM} \nabla_{\V} u, \nabla_{\V} u \rangle_{L^2(FM, \Lambda^2\mathbb{R}^n)}.
\end{split}
\end{equation}
\end{theorem}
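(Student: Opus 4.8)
The plan is to reduce the whole identity to a single algebraic fact, namely the relation
\[
\nabla_{\V}\X = \X\nabla_{\V} + \nabla_{\HH},
\]
which is nothing but a restatement of the definition $\nabla_{\HH} = -[\X,\nabla_{\V}]$ in \eqref{equation:nablah}, and then to exploit three ingredients: the anti-selfadjointness $\X^* = -\X$, the commutation relation $[\X,\nabla_{\HH}] = R_{FM}\nabla_{\V}$ of \eqref{equation:commutator-nablah}, and the key identity \eqref{equation:formule-chiante}. Throughout I would work with the real $L^2$-pairing $\langle\cdot,\cdot\rangle$ on $FM$, abbreviating the various subscripts, and deal with complex-valued $u$ at the end by taking real and imaginary parts. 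The first move is to substitute $\nabla_{\V}\X u = \X\nabla_{\V}u + \nabla_{\HH}u$ into the first norm and expand, which gives
\[
\|\nabla_{\V}\X u\|^2 - \|\X\nabla_{\V}u\|^2 = 2\langle \X\nabla_{\V}u, \nabla_{\HH}u\rangle + \|\nabla_{\HH}u\|^2 .
\]
Thus everything reduces to evaluating $2C + \|\nabla_{\HH}u\|^2$, where $C := \langle \X\nabla_{\V}u, \nabla_{\HH}u\rangle$ is the cross term, and showing that it equals $(n-1)\|\X u\|^2 - \langle R_{FM}\nabla_{\V}u, \nabla_{\V}u\rangle$.

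Next I would integrate by parts in $\X$ and invoke the commutator. Using $\X^* = -\X$ and then $\X\nabla_{\HH} = \nabla_{\HH}\X + R_{FM}\nabla_{\V}$ from \eqref{equation:commutator-nablah}, one rewrites
\[
C = -\langle \nabla_{\V}u, \X\nabla_{\HH}u\rangle = -\langle \nabla_{\V}u, \nabla_{\HH}\X u\rangle - \langle R_{FM}\nabla_{\V}u, \nabla_{\V}u\rangle ,
\]
so the curvature term has already appeared with the sign demanded by \eqref{equation:global-pestov-frame}. It then remains to treat $\langle\nabla_{\V}u, \nabla_{\HH}\X u\rangle = \langle \nabla_{\HH}^*\nabla_{\V}u, \X u\rangle$.

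This is where Lemma~\ref{lemma:formule-chiante} enters, in the form $\nabla_{\HH}^*\nabla_{\V} = \nabla_{\V}^*\nabla_{\HH} - (n-1)\X$: it produces the term $-(n-1)\|\X u\|^2$ and converts $\nabla_{\HH}^*\nabla_{\V}$ into $\nabla_{\V}^*\nabla_{\HH}$. Moving $\nabla_{\V}^*$ back by adjunction and using $\nabla_{\V}\X u = \X\nabla_{\V}u + \nabla_{\HH}u$ once more turns $\langle\nabla_{\V}^*\nabla_{\HH}u, \X u\rangle$ into $\langle\nabla_{\HH}u, \nabla_{\V}\X u\rangle = C + \|\nabla_{\HH}u\|^2$. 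The pleasant feature is that the cross term $C$ thereby \emph{reappears}: collecting everything yields the linear equation
\[
C = -C - \|\nabla_{\HH}u\|^2 + (n-1)\|\X u\|^2 - \langle R_{FM}\nabla_{\V}u, \nabla_{\V}u\rangle ,
\]
hence $2C + \|\nabla_{\HH}u\|^2 = (n-1)\|\X u\|^2 - \langle R_{FM}\nabla_{\V}u, \nabla_{\V}u\rangle$, which combined with the expansion of Step~1 is precisely \eqref{equation:global-pestov-frame}.

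I expect the main obstacle to be purely one of bookkeeping: tracking the signs through the several integrations by parts so that the reappearing cross term $C$ carries the coefficient $-1$ on the right, making the final solve for $C$ legitimate. A secondary subtlety is the identification $\langle\nabla_{\V}u, R_{FM}\nabla_{\V}u\rangle = \langle R_{FM}\nabla_{\V}u, \nabla_{\V}u\rangle$ used to match the stated form of the curvature term, together with the passage from real- to complex-valued $u$; for a real quadratic form this identification is automatic, while in the complex case one should verify that only the symmetric part of $R_{FM}$ enters and that the real parts of the cross terms combine as above.
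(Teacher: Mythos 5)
Your proof is correct, and it rests on exactly the same ingredients as the paper's: the definition $\nabla_{\HH}=-[\X,\nabla_{\V}]$ from \eqref{equation:nablah}, the commutator \eqref{equation:commutator-nablah}, Lemma \ref{lemma:formule-chiante}, and the skew-adjointness $\X^*=-\X$. The difference is purely one of packaging. The paper first proves the \emph{pointwise} operator identity \eqref{equation:global-pestov-pointwise}, $\X^*\nabla_{\V}^*\nabla_{\V}\X - \nabla_{\V}^*\X^*\X\nabla_{\V} = (n-1)\X^*\X - \nabla_{\V}^*R_{FM}\nabla_{\V}$, by a short chain of commutator manipulations, and then pairs with $u$ and integrates; you work instead entirely with integrated quadratic forms, expanding $\|\nabla_{\V}\X u\|^2=\|\X\nabla_{\V}u+\nabla_{\HH}u\|^2$ and pinning down the cross term $C$ by letting it reappear with opposite sign and solving the resulting linear equation. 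The two computations are algebraically equivalent (your self-consistency trick for $C$ is the formal self-adjointness of the paper's operator identity surfacing at the integrated level), and your sign bookkeeping does close up: following your steps one indeed gets $C=-C-\|\nabla_{\HH}u\|^2+(n-1)\|\X u\|^2-\langle R_{FM}\nabla_{\V}u,\nabla_{\V}u\rangle$. What the paper's route buys is the stronger pointwise statement, valid before integration; what yours buys is that no operator identity ever needs to be stated.

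On your final caveat: $R_{FM}$ is in general genuinely \emph{not} pointwise symmetric (by \eqref{equation:curvature} it maps all of $\Lambda^2\R^n$ into $\mathrm{Span}(\e_1\wedge\e_j)_{2\le j\le n}$ yet need not vanish on $\mathfrak{so}(n-1)$), but for real-valued $u$ this is irrelevant: $\langle\nabla_{\V}u,R_{FM}\nabla_{\V}u\rangle=\langle R_{FM}\nabla_{\V}u,\nabla_{\V}u\rangle$ is just symmetry of the real $L^2$ pairing, both slots holding the same vector. For complex-valued $u$, splitting into real and imaginary parts yields the identity with $\mathrm{Re}\,\langle R_{FM}\nabla_{\V}u,\nabla_{\V}u\rangle$ on the right, and dropping the $\mathrm{Re}$ requires knowing that $\nabla_{\V}^*R_{FM}\nabla_{\V}$ is formally self-adjoint; note that this does \emph{not} follow by polarization from the real integrated identity alone, but it does follow from the paper's operator identity, since every other term there is manifestly formally self-adjoint. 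So if you want the complex case in the stated form, the cleanest fix is to record the operator identity after all — which is precisely the paper's formulation.
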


Observe that \eqref{equation:global-pestov-frame} only involves $L^2$-norms of second-order derivatives of the function, so it actually holds more generally for functions $u$ in the Sobolev space $H^2(FM)$. Theorem \ref{theorem:global-pestov-frame} actually follows after integration over $FM$ from the following pointwise Pestov identity:

\begin{proposition}
The following identity holds:
\begin{equation}
\label{equation:global-pestov-pointwise}
\X^*\nabla_{\V}^*\nabla_{\V}\X - \nabla_{\V}^*\X^*\X\nabla_{\V} = (n-1) \X^*\X - \nabla_{\V}^*R_{FM}\nabla_{\V}.
\end{equation}
\end{proposition}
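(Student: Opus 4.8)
The plan is to treat \eqref{equation:global-pestov-pointwise} as an identity between second-order differential operators on $C^\infty(FM)$ and to reduce it purely formally to the structural relations already proved, namely the definition $\nabla_{\HH} = -[\X,\nabla_{\V}]$, the commutation relation \eqref{equation:commutator-nablah}, and Lemma \ref{lemma:formule-chiante}. First I would rewrite the left-hand side as a difference of two squares. Since $\X^* = -\X$, one has $\X^*\nabla_{\V}^*\nabla_{\V}\X = (\nabla_{\V}\X)^*(\nabla_{\V}\X)$ and $\nabla_{\V}^*\X^*\X\nabla_{\V} = (\X\nabla_{\V})^*(\X\nabla_{\V})$, so that the left-hand side equals $(\nabla_{\V}\X)^*(\nabla_{\V}\X) - (\X\nabla_{\V})^*(\X\nabla_{\V})$.

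Next I would insert the definition of the horizontal gradient. Since $\nabla_{\HH} = -[\X,\nabla_{\V}]$, we have $\nabla_{\V}\X = \X\nabla_{\V} + \nabla_{\HH}$. Setting $A := \X\nabla_{\V}$ and $B := \nabla_{\HH}$, the left-hand side becomes $(A+B)^*(A+B) - A^*A = A^*B + B^*A + B^*B$, which, using $\X^* = -\X$, expands to $-\nabla_{\V}^*\X\nabla_{\HH} + \nabla_{\HH}^*\X\nabla_{\V} + \nabla_{\HH}^*\nabla_{\HH}$.

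Then I would commute $\X$ through the horizontal gradient in each of the first two terms. In the first term, \eqref{equation:commutator-nablah} gives $\X\nabla_{\HH} = \nabla_{\HH}\X + R_{FM}\nabla_{\V}$, producing precisely the curvature contribution $-\nabla_{\V}^*R_{FM}\nabla_{\V}$ together with $-\nabla_{\V}^*\nabla_{\HH}\X$. In the second term, re-expressing $\X\nabla_{\V} = \nabla_{\V}\X - \nabla_{\HH}$ yields $\nabla_{\HH}^*\nabla_{\V}\X - \nabla_{\HH}^*\nabla_{\HH}$, and the last summand cancels the leftover $\nabla_{\HH}^*\nabla_{\HH}$. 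What remains is $(\nabla_{\HH}^*\nabla_{\V} - \nabla_{\V}^*\nabla_{\HH})\X - \nabla_{\V}^*R_{FM}\nabla_{\V}$, at which point Lemma \ref{lemma:formule-chiante} replaces the first factor by $-(n-1)\X$, giving $-(n-1)\X^2 - \nabla_{\V}^*R_{FM}\nabla_{\V}$. Since $\X^*\X = -\X^2$, this coincides with the right-hand side of \eqref{equation:global-pestov-pointwise}.

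The computation is entirely algebraic, so the only genuine difficulty is bookkeeping: keeping the signs consistent through the repeated use of $\X^* = -\X$ and applying each commutator in the correct order, so that the two $\nabla_{\HH}^*\nabla_{\HH}$ terms cancel and the curvature and $(n-1)$ terms assemble correctly. No analytic input is needed at this stage. Finally, Theorem \ref{theorem:global-pestov-frame} follows by pairing \eqref{equation:global-pestov-pointwise} with $u \in C^\infty(FM)$ in $L^2(FM)$ and rewriting each term as the corresponding $L^2$-norm, extending to $u \in H^2(FM)$ by density.
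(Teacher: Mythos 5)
Your proof is correct and follows essentially the same route as the paper: both arguments reduce the identity to the same three structural relations (the definition $\nabla_{\HH}=-[\X,\nabla_{\V}]$, the commutator $[\X,\nabla_{\HH}]=R_{FM}\nabla_{\V}$, and Lemma \ref{lemma:formule-chiante}), and both pass through the identical intermediate expression $(\nabla_{\HH}^*\nabla_{\V}-\nabla_{\V}^*\nabla_{\HH})\X-\nabla_{\V}^*[\X,\nabla_{\HH}]$ before concluding with $\X^*\X=-\X^2$. The only difference is organizational: you expand the difference of squares $(A+B)^*(A+B)-A^*A$ and let the $\nabla_{\HH}^*\nabla_{\HH}$ terms cancel, whereas the paper extracts the commutators $[\X,\nabla_{\V}^*]$ and $[\X,\nabla_{\V}]$ directly from the operator difference; these are trivially equivalent manipulations.
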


\begin{proof}
We write:
\[
\begin{split}
\X^*\nabla_{\V}^*\nabla_{\V}\X - \nabla_{\V}^*\X^*\X\nabla_{\V} &= [\X^*,\nabla_{\V}^*]\nabla_{\V}\X - \nabla_{\V}^*\X^*[\X,\nabla_{\V}] \\
& = - [\X,\nabla_{\V}^*]\nabla_{\V}\X + \nabla_{\V}^*\X[\X,\nabla_{\V}] \\
& = \nabla_{\HH}^* \nabla_{\V} \X - \nabla_{\V}^* \X \nabla_{\HH} \\
& = (\nabla_{\HH}^* \nabla_{\V} - \nabla_{\V}^* \nabla_{\HH}) \X - \nabla_{\V}^* [\X,\nabla_{\HH}] \\
& = - (n-1) \X^2 - \nabla_{\V}^* R_{FM} \nabla_{\V} = (n-1)\X^*\X - \nabla_{\V}^* R_{FM} \nabla_{\V}
\end{split}
\]
where we used in the second equality $\X^*=-\X$, in the third \eqref{equation:nablah} and \eqref{equation:divergence-commutators}, in the fifth \eqref{equation:formule-chiante} and \eqref{equation:commutator-nablah}, and in the last $\X^*=-\X$ again.
\end{proof}

\subsection{Associated Pestov identity}

As a corollary of Theorem \ref{theorem:global-pestov-frame}, we now prove similar global Pestov identities on homogeneous fibrations sitting ``between'' the frame bundle and the unit tangent bundle of the Riemannian manifold. This corresponds to the choice of a certain group $G \leqslant \mathrm{SO}(n-1)$; it will lead to an identity similar to \eqref{equation:global-pestov-frame} which we call the \emph{associated Pestov identity}.

\subsubsection{Homogeneous fibrations over spheres} Let $G \leqslant \mathrm{SO}(n-1) \hookrightarrow \mathrm{SO}(n)$ be a subgroup of $\mathrm{SO}(n-1)$, seen as a subgroup of $\mathrm{SO}(n)$ via the diagonal embedding
\[
\mathrm{SO}(n-1)\ni a \mapsto \begin{pmatrix} 1 & 0 \\ 0 & a \end{pmatrix} \in \mathrm{SO}(n).
\]
and let $P := \mathrm{SO}(n)/G$. This defines a fibration over $S^{n-1} = \mathrm{SO}(n)/\mathrm{SO}(n-1)$ with typical fiber given by $F := \mathrm{SO}(n-1)/G$. More precisely, one has $\mathrm{SO}(n) \stackrel{\pi}{\to} P \stackrel{\pi'}{\to}S^{n-1}$, where $\pi,\pi'$ are both Riemannian submersions.

We consider again the standard basis $(\e_1,\dotsc,\e_n)$ of $\R^n$. The fibration $\pi' \circ \pi : \mathrm{SO}(n) \to S^{n-1}$ is given by the map $a \mapsto a\,\mathbf{e}_1$. The tangent bundle of $\mathrm{SO}(n)$ is trivial and can be naturally identified at each point $w \in \mathrm{SO}(n)$ with the Lie algebra $\mathfrak{so}(n) \simeq \Lambda^2 \R^n$. This further decomposes as
\begin{equation}
    \label{equation:further-decomp}
\Lambda^2 \R^n = \mathfrak{so}(n) = \R^{n-1} \oplus \mathfrak{so}(n-1) = \mathfrak{v}_0 \oplus \ker \dd_{\id}(\pi' \circ \pi),
\end{equation}
where we introduce the notation $\mathfrak{v}_0 :=\mathrm{Span}(\e_1 \wedge \e_j)_{2 \leq j \leq n}\simeq \R^{n-1}$ and $\mathfrak{so}(n - 1) = \mathrm{Span}(\e_i \wedge \e_j)_{2 \leq i < j \leq n}$. Observe that the restriction of the metric on $\Lambda^2\R^n$ to $\R^{n-1}$ is the standard metric making $(\e_j)_{2 \leq j \leq n}$ into an orthonormal basis.

Let $\mathfrak{v}_2 \leqslant \mathrm{so}(n-1)$ be the Lie algebra of $G$, and $\mathfrak{v}_1$ be its orthogonal in $\mathfrak{so}(n-1)$. Then, we can further decompose $\mathfrak{so}(n) = \mathfrak{v}_0 \oplus \mathfrak{v}_1 \oplus \mathfrak{v}_2$; correspondingly, the tangent bundle $T\mathrm{SO}(n)$ splits as $T\mathrm{SO}(n) = \V_{\mathrm{SO}(n)}^0 \oplus \V_{\mathrm{SO}(n)}^1 \oplus \V_{\mathrm{SO}(n)}^2$ where $\V_{\mathrm{SO}(n)}^2 := \ker \dd\pi$. Moreover, we can decompose $TP = \V^0_P \oplus \V^1_P$, where $\V^1_P = \ker \dd\pi'$ and $\V^0_P = \dd\pi(\V_{\mathrm{SO}(n)}^0)$. Overall, we thus obtain the following diagram:
\begin{equation}
\label{equation:diagram}
\vcenter{
\xymatrix@C=1em{
    T\mathrm{SO}(n) \ar[d]_{\dd\pi} & = & \V_{\mathrm{SO}(n)}^0 \ar[d]_{\dd\pi} & \oplus & \V_{\mathrm{SO}(n)}^1  \ar[d]_{\dd\pi} & \oplus & \V_{\mathrm{SO}(n)}^2  \ar[d]_{\dd\pi} \\ 
    TP \ar[d]_{\dd\pi'} & = & \V_P^0 \ar[d]_{\dd\pi'} & \oplus & \V_P^1  \ar[d]_{\dd\pi'}& \oplus  & \{0\} \ar[d]_{\dd\pi'} \\
    TS^{n-1} & = & \V^0_{S^{n-1}} & \oplus & \{0\} & \oplus & \{0\},
  }
  }
\end{equation}
where the maps on the left-hand side of the equalities are Riemannian submersions, and the maps on the right-hand side not mapping to $\{0\}$ are isometries.

\subsubsection{Associated Pestov identity} Let $G \leqslant \mathrm{SO}(n-1)$ as before. We now consider the associated bundle $PM := FM \times_{\rho} F$ over $SM$  with typical fiber $F := \mathrm{SO}(n-1)/G$, defined as the set of equivalence classes $[w,f]$ for the relation 
\[
    (wa^{-1},f) \sim (w,\rho(a)f),\quad (w,f) \in FM \times F,\ a \in \mathrm{SO}(n-1),
\]
where $\rho : \mathrm{SO}(n-1) \to \mathrm{Aut}(F)$ is the left-action of $\mathrm{SO}(n-1)$ on $F$. We remark that one can write $PM = FM/G$, and that $PM$ also fibers over $M$ as an associated bundle of $FM \to M$ with respect to the representation $\rho : \mathrm{SO}(n) \to \mathrm{Aut}(\mathrm{SO}(n)/G)$ by left multiplication, that is $PM = FM \times_\rho (\mathrm{SO}(n)/G)$.
Choosing a base point $o \in F$ allows to define a natural Riemannian submersion $\pi : FM \to PM$ by setting $\pi(w) := [w,o]$. In the particular case where $G = \mathrm{SO}(n-1)$ and $F = S^{n-1}$, one gets $PM = SM$, the unit sphere bundle over $M$. Since $G \leqslant \mathrm{SO}(n-1)$, we observe that $\pi' : PM \to SM$ is naturally a Riemannian submersion over $SM$ making the following diagram of Riemannian submersions commute
\[
  \xymatrix{ FM \ar[r]_{\pi} \ar[rd]_{\pi_{FM}} & PM \ar[r]_{\pi'} & SM \ar[ld]^{\pi_{SM}} \\ & M & }
\]
As $G \leqslant \mathrm{SO}(n-1) \hookrightarrow \mathrm{SO}(n)$, given $a \in G$, one has
\begin{equation}
\label{equation:be1}
(R_a)_* \X  = (R_a)_* B_{\e_1} =B_{a^{-1}\e_1} = B_{\e_1} = \X ,
\end{equation}
and thus $\X $ descends to a vector field $\X_{PM}$; if $PM = SM$, then $\X_{PM} = \X_{SM}$ is the usual geodesic vector field. The vector field $\X_{PM}$ generates a flow $(\Phi_t^{PM})_{t \in\R}$ which is an \emph{extension} of the geodesic flow $(\varphi_t)_{t \in \R}$ on $SM$ in the sense that
\[
\pi' \circ \Phi_t^{PM} = \varphi_t \circ \pi', \qquad \forall t \in \R.
\]
If $(M,g)$ has negative sectional curvature, $(\varphi_t)_{t \in \R}$ is a \emph{uniformly hyperbolic} flow (also called \emph{Anosov} in the literature), while $(\Phi_t^{PM})_{t \in \R}$ is a \emph{partially hyperbolic flow} if $G$ is strictly contained in $\mathrm{SO}(n-1)$.

\begin{example}
Taking $G = \mathrm{SO}(n-2) \hookrightarrow \mathrm{SO}(n-1)$ (via the diagonal embedding), one obtains as typical fiber $\mathrm{St}_2 := \mathrm{SO}(n)/\mathrm{SO}(n-2)$, the Stiefel space of orthogonal $2$-frames, and the associated bundle $PM = \mathrm{St}_2M$ is the Stiefel bundle of orthogonal $2$-frames over $M$. The induced flow $(\Phi_t^{\mathrm{St}_2M})_{t \in \R}$ is the $2$-frame flow.
\end{example}

By \eqref{equation:diagram}, the tangent bundles of $FM, PM$ and $SM$ split as
\begin{equation}
 \label{equation:split-tfm} 
 \vcenter{
\xymatrix@C=1em{
    T(FM) \ar[d]_{\dd\pi} & = & \HH_{FM} \ar[d]_{\dd\pi} & \oplus & \V_{FM}^0 \ar[d]_{\dd\pi} & \oplus & \V_{FM}^1  \ar[d]_{\dd\pi} & \oplus & \V_{FM}^2  \ar[d]_{\dd\pi} \\
  T(PM) \ar[d]_{\dd\pi'} & = & \HH_{PM} \ar[d]_{\dd\pi'} & \oplus & \V_{PM}^0 \ar[d]_{\dd\pi'} & \oplus & \V_{PM}^1  \ar[d]_{\dd\pi'}& \oplus  & \{0\} \ar[d]_{\dd\pi'} \\
    T(SM) & = & \HH_{SM} & \oplus & \V^0_{SM} & \oplus & \{0\} & \oplus & \{0\},
    }
    }
\end{equation}
where we added the subscript $FM,PM$ and $SM$ to keep track of the bundle. 

Define the vertical space on $PM$ as $\V_{PM} := \V_{PM}^0 \oplus \V_{PM}^1$. By \eqref{equation:split-tfm}, given $w \in FM$ and $T \in (\V^{i}_{PM})_{\pi(w)}$, there is a well-defined vertical lift $T_w^{\V^{FM}} \in (\V^{i}_{FM})_w$ for $i\in\{0,1\}$. Observe that if $T^{\V^{FM}}_w=Y_\xi$ then $T^{\V^{FM}}_{wa}=Y_{a^{-1}\xi}$ for every $a\in G$. Here $a^{-1}\xi$ denotes the adjoint action of $a^{-1}$ on $\xi \in \mathfrak{so}(n)$.

The \emph{vertical gradient} \begin{equation}
    \nabla_{\V}^{PM} : C^\infty(PM) \to C^\infty(PM,\V_{PM}),
\end{equation}
is defined as the orthogonal projection of the total gradient onto $\V_{PM}$, and further splits as $\nabla_{\V}^{PM} = \nabla_{\V^0}^{PM} + \nabla_{\V^1}^{PM}$.

\begin{lemma}
For all $u \in C^\infty(PM)$, 
\begin{equation}
\label{equation:nablav-inv} 
\nabla_{\V}^{PM} u(z) := \dd\pi\left( [\nabla_{\V}^{FM} \pi^* u]^{\V}(w) \right),\quad z \in PM, w \in \pi^{-1}(z).
\end{equation}
\end{lemma}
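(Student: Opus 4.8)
The plan is to reinterpret the right-hand side of \eqref{equation:nablav-inv} as an orthogonal projection of the \emph{total} Riemannian gradient, and then to push that projection through the submersion $\pi$. Write $\nabla^{FM}$ for the gradient on $(FM,G_{FM})$. Since $(Y_{\omega_\alpha})_\alpha$ is an orthonormal frame of $\V_{FM}$ and $Y_{\omega_\alpha}(\pi^* u) = G_{FM}(\nabla^{FM}\pi^* u,\, Y_{\omega_\alpha})$, the definition \eqref{equation:nablav} of the vertical gradient gives, after applying the vertical lift,
\[
[\nabla_{\V}^{FM}\pi^* u]^{\V} = \sum_\alpha Y_{\omega_\alpha}(\pi^* u)\, Y_{\omega_\alpha} = \mathrm{proj}_{\V_{FM}}\big(\nabla^{FM}\pi^* u\big),
\]
the orthogonal projection of the total gradient onto the bundle vertical $\V_{FM} = \ker\dd\pi_{FM} = \V^0_{FM}\oplus\V^1_{FM}\oplus\V^2_{FM}$.

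Next I would use that $\pi : FM \to PM$ is a Riemannian submersion and that $\pi^* u$ is constant along its fibers. For any $V\in\V^2_{FM}=\ker\dd\pi$ one has $V(\pi^* u)=(\dd\pi\, V)(u)=0$, so $\nabla^{FM}\pi^* u$ is $G_{FM}$-orthogonal to $\V^2_{FM}$; consequently its $\V_{FM}$-projection has no $\V^2_{FM}$-component, i.e.
\[
\mathrm{proj}_{\V_{FM}}(\nabla^{FM}\pi^* u) = \mathrm{proj}_{\V^0_{FM}\oplus\V^1_{FM}}(\nabla^{FM}\pi^* u).
\]
Moreover, by the defining property of a Riemannian submersion, $\dd\pi(\nabla^{FM}\pi^* u)=\nabla^{PM}u$, the total gradient on $PM$.

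Finally I would invoke the diagram \eqref{equation:split-tfm}: the splitting $T(FM)=\HH_{FM}\oplus\V^0_{FM}\oplus\V^1_{FM}\oplus\V^2_{FM}$ is $G_{FM}$-orthogonal, $\dd\pi$ annihilates $\V^2_{FM}$, and it restricts to isometries $\V^i_{FM}\xrightarrow{\sim}\V^i_{PM}$ for $i=0,1$ (and $\HH_{FM}\xrightarrow{\sim}\HH_{PM}$). Hence $\dd\pi$ intertwines the orthogonal projections onto $\V^0\oplus\V^1$ on the two sides, and combining the previous two displays yields
\[
\dd\pi\big([\nabla_{\V}^{FM}\pi^* u]^{\V}\big)
= \mathrm{proj}_{\V_{PM}}\big(\dd\pi\,\nabla^{FM}\pi^* u\big)
= \mathrm{proj}_{\V_{PM}}\nabla^{PM}u
= \nabla_{\V}^{PM}u,
\]
which is exactly \eqref{equation:nablav-inv}.

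The step I expect to require the most care is the interplay between $\dd\pi$ and the orthogonal projections, namely that the splitting \eqref{equation:split-tfm} is orthogonal and blockwise-isometric for the submersion metrics — this is precisely where the Riemannian submersion structure of \eqref{equation:diagram}/\eqref{equation:split-tfm} is genuinely used. I would also record that the right-hand side of \eqref{equation:nablav-inv} is independent of the chosen lift $w\in\pi^{-1}(z)$: this follows from the $G$-equivariance $T^{\V^{FM}}_{wa}=Y_{a^{-1}\xi}$ noted above, which guarantees that $[\nabla_{\V}^{FM}\pi^* u]^{\V}$ projects under $\dd\pi$ to the same element of $(\V_{PM})_z$ at every point of the fiber.
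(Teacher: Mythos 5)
Your proof is correct, but it takes a recognizably different route from the paper's. The paper argues by duality: it pairs $\dd\pi\bigl([\nabla_{\V}^{FM}\pi^*u]^{\V}(w)\bigr)$ against an arbitrary vertical test vector $T\in\V_{PM}(z)$, uses the Riemannian submersion property to rewrite this as $\langle [\nabla_{\V}^{FM}\pi^*u]^{\V}(w), T_w^{\V_{FM}}\rangle = T_w^{\V_{FM}}(\pi^*u)$, and then applies the chain rule to get $\dd u(T)=\langle \nabla_{\V}^{PM}u(z),T\rangle$, concluding since $T$ is arbitrary. You instead work on the primal side: you identify $[\nabla_{\V}^{FM}\pi^*u]^{\V}$ with $\mathrm{proj}_{\V_{FM}}(\nabla^{FM}\pi^*u)$, note that the gradient of the basic function $\pi^*u$ is orthogonal to $\ker\dd\pi=\V^2_{FM}$, invoke the standard submersion fact $\dd\pi(\nabla^{FM}\pi^*u)=\nabla^{PM}u$, and commute $\dd\pi$ past the orthogonal projections using the blockwise structure of \eqref{equation:split-tfm}. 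Both arguments rest on exactly the same geometric input (the orthogonal, $\dd\pi$-compatible splitting \eqref{equation:split-tfm} and the submersion property of $\pi$), so the difference is one of bookkeeping rather than of ideas: your version needs two auxiliary facts that the paper's pairing sidesteps --- the gradient-pushforward property for basic functions and the intertwining $\dd\pi\circ\mathrm{proj}_{\V^0_{FM}\oplus\V^1_{FM}}=\mathrm{proj}_{\V_{PM}}\circ\dd\pi$ --- both of which are standard and which you justify adequately, while the paper's test-vector computation is slightly more economical. What your formulation buys is that it makes the conceptual content explicit (the vertical gradient is the projection of the total gradient, and this is compatible with Riemannian submersions preserving the splitting), and it would carry over verbatim to projections onto other blocks of \eqref{equation:split-tfm}. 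One minor remark: your closing paragraph on independence of the lift $w$ is harmless but redundant, since the left-hand side $\nabla_{\V}^{PM}u(z)$ does not involve $w$ at all, so once the identity is proved for every $w\in\pi^{-1}(z)$, well-definedness of the right-hand side is automatic.
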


\begin{proof}
    Let $T\in \V_{PM}(z)$ be any vertical tangent vector. Since $\pi$ is a Riemannian submersion, we have
    \begin{eqnarray*}
    \langle \dd\pi\left( [\nabla_{\V}^{FM} \pi^* u]^{\V}(w) \right),T\rangle&=&
    \langle  [\nabla_{\V}^{FM} \pi^* u]^{\V}(w) ,T_w^{\V_{FM}}\rangle = T_w^{\V_{FM}}(\pi^* u)\\
    &=&\dd u(\dd\pi(T_w^{\V_{FM}}))=\dd u(T)=\langle \nabla_{\V}^{PM} u(z),T\rangle.
    \end{eqnarray*}
\end{proof}

Similarly to \eqref{equation:curvature}, we can introduce the curvature endomorphism on $PM$ as a section $R_{PM} \in C^\infty(PM,\mathrm{End}(\V_{PM}))$ given at $z \in PM$ by
\begin{equation}
\label{equation:r}
\langle R_{PM}(z)T, T' \rangle := \langle R_{FM}(w) (\xi(w)), \xi'(w) \rangle,
\end{equation}
where $w \in FM$ is any point such that $\pi(w)=z$ and $\xi(w),\xi'(w)\in\mathfrak{so}(n)$ are defined by $T^{\V_{FM}}_w=Y_{\xi(w)}$ and $(T')^{\V_{FM}}_w = Y_{\xi'(w)}$. We claim that $R_{PM}(z)$ is well-defined, i.e. independent of the choice of $w \in FM$ such that $\pi(w)=z$. Indeed, \eqref{equation:curvature} shows that for every $a\in\mathrm{SO}(n-1)$ and $w\in FM$, $R_{FM}(wa)=a^{-1}\circ R_{FM}(w)\circ a$, where the action of $a\in\mathrm{SO}(n-1)\subset \mathrm{SO}(n)$ on $\Lambda^2\R^n$ is the adjoint action. Moreover, for every $a\in G$ we have $\xi(wa)=a^{-1}\xi(w)$ and $\xi'(wa)=a^{-1}\xi'(w)$. Thus 
\begin{equation}
\label{equation:r-inv}
\langle R_{FM}(w) (\xi(w)), \xi'(w) \rangle=\langle R_{FM}(wa) (\xi(wa)), \xi'(wa) \rangle,\quad a \in G.
\end{equation}

Moreover, observe that by definition $R_{PM}$ maps into $\V_{PM}^0$ and therefore splits as a sum $R = R_0 + R_1$, where $R_0(z) \in \End(\V_{PM}^0)$ and $R_1 \in \mathrm{Hom}(\V_{PM}^1,\V_{PM}^0)$. We can further express the $R_0$ part. Indeed, recall that for $v \in SM$, the connection map $\mc{K} : \V^0_{SM} \to v^\perp$ was introduced in \S\ref{ssection:unit-tangent-bundle}. For every $z \in PM$, writing $v := \pi'(z) \in SM$, there is a natural identification $\mc{K} \dd\pi' : \V^0_{PM} \to v^\perp$. One can then further obtain:

\begin{lemma}
For all $z \in PM$, $T,T' \in \V_{PM}(z)$:
\begin{equation}
\label{equation:r-splitting}
\begin{split}
\langle R_{PM}(z)T, T' \rangle & = \langle R_0(z) T, T' \rangle + \langle R_1(z) T, T' \rangle \\
& = \langle \mc{R}_x\left(\mc{K} \dd\pi'(T) \wedge v\right), v \wedge \mc{K} \dd\pi'(T') \rangle + \langle R_1(z) T, T' \rangle,
\end{split}
\end{equation}
where $x:=\pi_{SM}(v)$. 
\end{lemma}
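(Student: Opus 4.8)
The first equality is purely formal: by construction $R_{PM}(z)$ takes values in $\V^0_{PM}$, so it decomposes according to the splitting of its domain $\V_{PM}=\V^0_{PM}\oplus\V^1_{PM}$ as $R_0\oplus R_1$, where $R_0:=R_{PM}|_{\V^0_{PM}}\in\End(\V^0_{PM})$ and $R_1:=R_{PM}|_{\V^1_{PM}}\in\mathrm{Hom}(\V^1_{PM},\V^0_{PM})$. Pairing against $T'$ then gives $\langle R_{PM}(z)T,T'\rangle=\langle R_0(z)T,T'\rangle+\langle R_1(z)T,T'\rangle$. Since by \eqref{equation:split-tfm} the map $\dd\pi'$ annihilates $\V^1_{PM}$ and restricts to an isometry $\V^0_{PM}\to\V^0_{SM}$, the quantity $\mc{K}\dd\pi'(T)$ depends only on the $\V^0_{PM}$-part of $T$, and likewise $R_0(z)T$ only sees this part. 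Hence for the second (and only substantial) equality it suffices to treat $T,T'\in\V^0_{PM}(z)$.

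The plan is to push everything back to the frame bundle through \eqref{equation:r} and reduce to an algebraic identity in $\Lambda^2\R^n$. Fix $w\in FM$ with $\pi(w)=z$, set $v:=w(\e_1)$, and let $Y_{\xi}=T^{\V_{FM}}_w$ and $Y_{\xi'}=(T')^{\V_{FM}}_w$ with $\xi,\xi'\in\mathfrak{v}_0$. The key step is the geometric identification
\[
\mc{K}\dd\pi'(T)=w(\xi\e_1),
\]
which rewrites the abstract connection map of \S\ref{ssection:unit-tangent-bundle} as the contraction $\iota_{\e_1}\xi$ read through the frame. I would prove it by composing the submersions $FM\xrightarrow{\pi}PM\xrightarrow{\pi'}SM$ into the single map $w\mapsto w(\e_1)$: the curve $t\mapsto we^{t\xi}$ representing $Y_\xi$ projects to the curve $t\mapsto\bigl(x,w(e^{t\xi}\e_1)\bigr)$ in $SM$, whose base point $x=\pi_{FM}(w)$ is constant. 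The parallel transport in the definition of $\mc{K}$ is therefore trivial along it, and differentiating at $t=0$ yields $w\bigl(\tfrac{\dd}{\dd t}|_{0}e^{t\xi}\e_1\bigr)=w(\xi\e_1)$. Since $\xi\e_1=\iota_{\e_1}\xi\in\mathrm{Span}(\e_j)_{j\geq2}$, this vector indeed lies in $v^\perp$.

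With this identification, I would substitute $\xi,\xi'$ into \eqref{equation:curvature}. Writing $a:=w(\xi\e_1)$ and $b:=w(\xi'\e_1)$, the fact that $w$ is an isometry of $\Lambda^2$ gives $w\xi=v\wedge a$, so that $\langle R_0(z)T,T'\rangle=\langle R_{FM}(w)\xi,\xi'\rangle=\langle v\wedge a,\mc{R}_x(b\wedge v)\rangle$. The right-hand side of \eqref{equation:r-splitting} equals $\langle\mc{R}_x(a\wedge v),v\wedge b\rangle$, and the two coincide after invoking the symmetry of $\mc{R}_x$ as an endomorphism of $\Lambda^2T_xM$ (see \eqref{equation:utile}) together with the antisymmetry of the wedge product. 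I expect the only real obstacle to be the identification $\mc{K}\dd\pi'(T)=w(\xi\e_1)$: everything afterwards is bilinear bookkeeping, whereas this step requires carefully unwinding the three superposed definitions of the vertical lift, the submersion $\dd\pi'$ and the connection map, and checking that the constancy of the base point makes the parallel transport drop out.
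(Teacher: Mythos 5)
Your proposal is correct and follows essentially the same route as the paper: reduce via \eqref{equation:r} to a computation on the frame bundle, identify $\mc{K}\dd\pi'$ on vertical lifts as $Y_\xi \mapsto w(\xi\e_1)$ using that the projected curve has constant base point (so the parallel transport in $\mc{K}$ is trivial), and conclude with \eqref{equation:curvature} together with the symmetry of $\mc{R}_x$ on $2$-forms. The only cosmetic difference is that you remove the $\V^1_{PM}$-components by a formal bilinearity argument at the start, whereas the paper keeps the full decomposition $\xi = \e_1\wedge\theta + \xi_1$ and verifies $\dd\pi'(T_1)=0$ explicitly within the same frame-bundle computation.
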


\begin{proof}By \eqref{equation:r}, it suffices to prove this equality on $PM = FM$. In this case, $\pi=\mathrm{Id}$ and $\pi' : FM \to SM$ is the standard projection $w \mapsto v = w(\e_1)$. 

Let $\xi(w),\xi'(w)\in\mathfrak{so}(n)$ be defined by $T_w=Y_{\xi(w)}$ and $T'_w = Y_{\xi'(w)}$. Since $\V_{FM}$ can be identified with $ \mathfrak{so}(n)=\R^{n-1}\oplus\mathfrak{so}(n-1)$ (see \eqref{equation:further-decomp}), one can decompose
\[
    \xi(w) =: \e_1 \wedge \theta + \xi_1,\quad \xi'(w) =: \e_1 \wedge \theta' + \xi_1',\quad \theta, \theta' \in \R^{n-1},\quad \xi_1, \xi_1' \in \mathfrak{so}(n-1),\]
    and correspondingly $T=T_0+T_1$, $T'=T'_0+T'_1$. We then compute
    \[\dd\pi'(w)(T_1) = \dfrac{\dd}{\dd t}|_{t=0}( w \circ e^{t\xi_1} \e_1)=0,
\]
and
\[
    \mc{K} \dd\pi'(w)(T_0) =\mc{K}  \dfrac{\dd}{\dd t}|_{t=0} (w \circ e^{t \e_1 \wedge \theta} \e_1) = w(\theta),\qquad \forall\theta \in \mathbb{R}^{n - 1}, w\in FM,
\]
thus showing that $\mc{K} \dd\pi'(w)(T)=w(\theta)$, and similarly $\mc{K} \dd\pi'(w)(T')=w(\theta')$.
    Then, using \eqref{equation:curvature}, we find:
\[
\begin{split}
\langle R_{FM}(w)T,T'\rangle &= \underbrace{\langle R_{FM}(w)(\e_1 \wedge \theta), \e_1 \wedge \theta' \rangle}_{=\langle R_0(w)T,T'\rangle} +  \underbrace{\langle R_{FM}(w)(\xi_1), \e_1 \wedge \theta' \rangle}_{=\langle R_1(w)T,T'\rangle}  \\
& = \langle w(\e_1 \wedge \theta), \mc{R}_x(w(\theta') \wedge v)\rangle + \langle R_1(w)T,T'\rangle \\
& = \langle \mc{R}_x(v \wedge \mc{K}\dd\pi'(T)),\mc{K}\dd\pi'(T') \wedge v \rangle + \langle R_1(w)T,T'\rangle \\
& = \langle \mc{R}_x(\mc{K}\dd\pi'(T)\wedge v),v \wedge \mc{K}\dd\pi'(T') \rangle + \langle R_1(w)T,T'\rangle.
\end{split}
\]
where we have used in the last equality the fact that $\mc{R}_x$ is symmetric on $2$-forms.
\end{proof}

Observe that $\langle R_0(z) T, T' \rangle$ is the sectional curvature of $\mc{K} \dd\pi'(T) \wedge v$ when $T=T'$.
In the particular case where $G=\mathrm{SO}(n-1)$, one gets $PM=SM$, and the term $R_1$ in \eqref{equation:r-splitting} vanishes. Hence for all $v \in SM,$ and $T,T' \in (\V^0_{SM})_v$, 
\begin{equation}
\label{equation:courbure-sphere}
\langle R_{SM} T,T'\rangle =  \langle \mc{R}_x(\mc{K}T \wedge v), v \wedge \mc{K}T'\rangle.
\end{equation}

We can now derive the following associated Pestov identity:

\begin{corollary}[Associated Pestov identity]
\label{corollary:associated}
For all $u \in C^\infty(PM)$:
\begin{equation}
\label{equation:global-pestov-associated}
\begin{split}
 \|\nabla_{\V}^{PM} \X_{PM} u\|^2_{L^2(PM, \V_{PM})} - \|\X_{PM} \nabla_{\V}^{PM} u\|^2_{L^2(PM, \V_{PM})}&\\ 
 &\hspace{-7cm}= (n-1)\|\X_{PM} u\|^2_{L^2(PM)} - \langle R_{PM} \nabla_{\V}^{PM} u, \nabla_{\V}^{PM} u \rangle_{L^2(PM, \V_{PM})}.
\end{split}
\end{equation}
\end{corollary}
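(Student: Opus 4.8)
The plan is to apply Theorem \ref{theorem:global-pestov-frame} to the pullback $f := \pi^* u \in C^\infty(FM)$ and then push each of the four terms in \eqref{equation:global-pestov-frame} down to $PM$, using that $\pi : FM \to PM$ is a Riemannian submersion whose fibers are the $G$-orbits. Since $R_a$ acts by isometries of $G_{FM}$ for $a\in\mathrm{SO}(n)$, all fibers of $\pi$ are isometric copies of $G$, and the coarea formula gives, for any $\Psi \in C^\infty(PM)$,
\[
\int_{FM} \pi^* \Psi \, \dd\mu = \vol(G) \int_{PM} \Psi \, \dd\mu_{PM}.
\]
Because $f$ is $G$-invariant ($R_a^* f = f$ for $a \in G$), it therefore suffices to show that each squared-norm (resp. pairing) integrand in \eqref{equation:global-pestov-frame} evaluated at $f$ is the $\pi$-pullback of the corresponding integrand in \eqref{equation:global-pestov-associated} evaluated at $u$; the common factor $\vol(G)$ then cancels after integration.

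First I would record two equivariance facts. By \eqref{equation:be1}, $\X = B_{\e_1}$ is $G$-invariant, so $\X f = \pi^*(\X_{PM} u)$; this immediately matches the term $(n-1)\|\X f\|^2 \leftrightarrow (n-1)\|\X_{PM}u\|^2$. Next, for any $G$-invariant $h \in C^\infty(FM)$, the vertical gradient $\nabla_{\V} h$ is a $G$-equivariant $\Lambda^2\R^n$-valued function taking values pointwise in $\mathfrak{v}_0\oplus\mathfrak{v}_1$: its $\mathfrak{v}_2 = \mathrm{Lie}(G)$ component is $Y_\xi h$ with $\xi\in\mathfrak{v}_2$, and these directions are tangent to the fibers of $\pi$, along which $h$ is constant. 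Since the vertical lift $\xi\mapsto\xi^{\V}$ is an isometry $\Lambda^2\R^n\to\V_{FM}$ (by definition of $G_{FM}$) and $\dd\pi$ restricts to an isometry $\V_{FM}^0\oplus\V_{FM}^1\to\V_{PM}$ (diagram \eqref{equation:split-tfm}), combining with \eqref{equation:nablav-inv} yields the pointwise identity $|\nabla_{\V} h|_{\Lambda^2\R^n} = |\nabla_{\V}^{PM}\bar h|_{\V_{PM}}\circ\pi$, where $\bar h$ denotes the descent of $h$. Applying this with $h = \X f = \pi^*(\X_{PM}u)$ handles $\|\nabla_{\V}\X f\|^2 \leftrightarrow \|\nabla_{\V}^{PM}\X_{PM}u\|^2$, and applying it with $h=f$ together with the defining equation \eqref{equation:r} of $R_{PM}$ handles the curvature term $\langle R_{FM}\nabla_\V f,\nabla_\V f\rangle \leftrightarrow \langle R_{PM}\nabla_\V^{PM}u,\nabla_\V^{PM}u\rangle$.

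The remaining term $\|\X\nabla_{\V}f\|^2$ is where the main care is needed, and it is the step I expect to be the crux. Here $\X$ acts on the $\Lambda^2\R^n$-valued function $\nabla_\V f$, and I would exploit that $\X$ acts \emph{coefficientwise} in the fixed global trivialization $(\omega_\alpha)_\alpha$ (the $\omega_\alpha$ being constant), so it preserves the fixed subspace $\mathfrak{v}_0\oplus\mathfrak{v}_1$; moreover $(R_a)_*\X = \X$ for $a\in G$ shows $\X$ preserves $G$-equivariance. Consequently $\X\nabla_{\V}f$ is again a $G$-equivariant function valued pointwise in $\mathfrak{v}_0\oplus\mathfrak{v}_1$, i.e. the lift of a genuine section of $\V_{PM}$, which by the definition of the descended operator $\X_{PM}$ acting on $\V_{PM}$-valued sections is exactly $\X_{PM}\nabla_{\V}^{PM}u$; the isometry $\dd\pi$ on $\V_{FM}^0\oplus\V_{FM}^1$ then gives $|\X\nabla_{\V}f| = |\X_{PM}\nabla_{\V}^{PM}u|\circ\pi$. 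The delicate point to verify is precisely that $\X\nabla_{\V}f$ acquires \emph{no} $\mathfrak{v}_2$-component, so that it descends to $\V_{PM}$ rather than to a larger bundle; this is exactly what makes the two left-hand sides match instead of differing by the norm of a dropped component. With all four integrands matched, integrating over $FM$, invoking the coarea identity, and dividing by $\vol(G)$ yields \eqref{equation:global-pestov-associated}. The specialization $G = \mathrm{SO}(n-1)$, $PM = SM$ (where $R_1$ vanishes and \eqref{equation:courbure-sphere} applies) then recovers the classical Pestov identity.
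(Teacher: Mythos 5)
Your proposal is correct and follows essentially the same route as the paper's proof: apply Theorem \ref{theorem:global-pestov-frame} to $\pi^*u$, use the $G$-equivariance facts \eqref{equation:nablav-inv}, \eqref{equation:r-inv} and \eqref{equation:be1} to identify each integrand with the pullback of the corresponding integrand on $PM$, and integrate over the $G$-fibers (Fubini/coarea). Your careful treatment of the term $\|\X\nabla_{\V}\pi^*u\|^2$ --- checking that $\X\nabla_{\V}\pi^*u$ stays valued in $\mathfrak{v}_0\oplus\mathfrak{v}_1$ and remains $G$-equivariant, hence descends to $\X_{PM}\nabla_{\V}^{PM}u$ --- is exactly the point the paper leaves implicit in its one-line appeal to $G$-invariance.
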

Note that by a slight abuse of notation, $\X_{PM}$ in the second term of the left hand side of \eqref{equation:global-pestov-associated} denotes the first order differential operator $\nabla^{PM}_{\X_{PM}}$ on $C^\infty(PM, \V_{PM})$, where $\nabla^{PM}$ is the covariant derivative on $\V_{PM}$ determined by the Levi-Civita connection on $FM$.
\begin{proof}
For $u \in C^\infty(PM)$, we apply the Pestov identity \eqref{equation:global-pestov-frame} to $\pi^* u \in C^\infty(FM)$. Combining \eqref{equation:nablav-inv} and \eqref{equation:r-inv} ($G$-equivariance of the vertical gradient and the curvature operator), we get that the functions appearing in the Pestov identity for \eqref{equation:global-pestov-frame} for $\pi^*u$ are $G$-invariant. Hence, using the Fubini Theorem, we obtain \eqref{equation:global-pestov-associated}.
\end{proof}

\begin{example}[Pestov identity on the unit tangent bundle] As a particular example, taking $G=\mathrm{SO}(n-1)$, we have $PM=SM$ and one retrieves the usual global Pestov identity on the unit tangent bundle (see \cite[Proposition 2.2]{Paternain-Salo-Uhlmann-15} for instance): for all $u \in C^\infty(SM)$,
\[
 \|\nabla_{\V}^{SM} \X_{SM} u\|^2 - \|\X_{SM}\nabla_{\V}^{SM} u\|^2 = (n-1)\|\X_{SM} u\|^2- \langle R_{SM} \nabla_{\V}^{SM} u, \nabla_{\V}^{SM} u \rangle,
\]
where the norms are computed in ${L^2(SM, \V_{SM})}$, except for the first term on the right hand side, which is computed in ${L^2(SM)}$.
Note that the curvature term was computed in \eqref{equation:courbure-sphere} and only involves the sectional curvature. When specialized to a function $u \in C^\infty(SM)$ that is a spherical harmonic of degree $m \geq 0$ in every spherical fiber of $SM$ (that is, $\Delta_{\V}^{SM} u = m(m+n-2)u$), one obtains (after non-trivial simplifications) the following formula known as the \emph{localized Pestov identity}:
\[
\dfrac{(n+m-2)(n+2m-4)}{n+m-3}\|\X_-u\|^2 - \dfrac{m(n+2m)}{m+1}\|\X_+u\|^2+\|Z_m u\|^2 = \langle R_{SM}\nabla_{\V} u,\nabla_{\V} u\rangle,
\]
where $Z_m$ is a differential operator of order $1$, and $\X_{SM}=\X_-+\X_+$ is the decomposition into raising and lowering operators, see \cite[Theorem 14.3.4]{Lefeuvre-book} for instance.
\end{example}

\begin{remark}Unfortunately, the unit tangent bundle $SM$ is the only associated bundle for which the curvature term introduced in \eqref{equation:r} and appearing on the right-hand side of the Pestov identity \eqref{equation:global-pestov-associated} has a sign in negative sectional curvature. Nevertheless, this curvature term does have a sign also for other associated bundles $PM$, provided that the metric is sufficiently pinched.
\end{remark}

\section{Applications in negative curvature: frame flow ergodicity}

\label{section:applications}

We now apply the previous Pestov identities in order to study the \emph{frame flow ergodicity} over a negatively curved Riemannian manifold $(M,g)$. The frame flow \eqref{equation:frame-flow} preserves the Liouville measure $\mu$, and ergodicity (with respect to $\mu$) is the property that
\[
L^2(FM,\mu) \cap \ker \X  = \C \cdot \mathbf{1}_{FM},
\]
where $\mathbf{1}_{FM}$ is the constant function on $FM$ equal to $1$ everywhere. On odd-dimensional manifolds (and dimension $n \neq 7$), frame flow ergodicity was proved by Brin-Gromov \cite{Brin-Gromov-80} using topological arguments. In even dimensions, Kähler manifolds of negative sectional curvature are natural counter-examples to frame flow ergodicity, and it is thus conjectured that, unless $(M,g)$ has some special holonomy reduction, its frame flow should be ergodic (see \cite[Conjecture 2.9]{Brin-82}). 

The \emph{pinching} of a negatively curved Riemannian manifold (with sectional curvature $\kappa_g$ normalized so that it is $\geq -1$) is the largest number $\delta \in (0,1)$ such that
\[
-1 \leq \kappa_g \leq -\delta.
\]
Riemannian manifolds with special holonomy satisfy $\delta \leq 0.25$ and it is thus natural to study the weaker conjecture of frame flow ergodicity for $0.25$-pinched Riemannian manifolds (that is $\delta > 0.25$). In \cite{Brin-Karcher-84,Burns-Pollicott-03}, it was proven that on even-dimensional manifolds (and in dimension $7$), the frame flow is ergodic under \emph{some pinching condition} on the curvature very close to $1$. This was improved by the authors in \cite{Cekic-Lefeuvre-Moroianu-Semmelmann-24} to get $\delta \sim0.27$ in dimensions $\dim M \equiv_4 2$. We also point out that similar ergodicity results hold in the setting of unitary frame flows over Kähler manifolds with negative sectional curvature, see \cite{Brin-Gromov-80} for complex odd-dimensional manifolds and \cite{Cekic-Lefeuvre-Moroianu-Semmelmann-24-2} for the even-dimensional case.

The (non-)ergodicity of the frame flow is described by means of a subgroup $H \leqslant \mathrm{SO}(n-1)$ called the \emph{transitivity group}, see \cite{Brin-75-1,Brin-75-2,Lefeuvre-21}:

\begin{theorem}[Characterization of ergodicity, \cite{Lefeuvre-21}]
\label{theorem:lefeuvre}
There exists a natural identification
\[
\Psi : L^2(H\backslash\mathrm{SO}(n-1)) \rightarrow L^2(FM,\mu) \cap \ker \X .
\]
In particular, the frame flow $(\Phi_t)_{t \in \R}$ is ergodic if and only if $H = \mathrm{SO}(n-1)$. Moreover, if $f \in C^\infty(H\backslash\mathrm{SO}(n-1))$, then the corresponding flow-invariant function $\Psi f \in \ker \X $ is also smooth, that is, $\Psi f \in C^\infty(FM)$. 
\end{theorem}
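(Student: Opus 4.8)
The plan is to prove the theorem via Brin's theory of isometric (compact group) extensions of Anosov flows, specialized to the frame flow. The starting point is to read the bundle projection $FM \to SM$, $w = (v,e_2,\ldots,e_n) \mapsto v = w(\e_1)$, as a principal $G$-bundle with $G := \mathrm{SO}(n-1)$ acting on the right by rotating $(e_2,\ldots,e_n)$. By \eqref{equation:be1} the frame flow $(\Phi_t)$ commutes with this right $G$-action and covers the geodesic flow $(\varphi_t)$, so $(\Phi_t)$ is an isometric $G$-extension of $(\varphi_t)$. Since $(M,g)$ is negatively curved, $(\varphi_t)$ is Anosov and ergodic for the Liouville measure, and $(\Phi_t)$ is therefore \emph{partially hyperbolic}: its central direction is spanned by the flow and the (compact, hence neutral) fiber directions $\mathfrak{so}(n-1)$, while its strong stable and strong unstable foliations $\wt W^{ss}, \wt W^{uu}$ project diffeomorphically, leaf by leaf, onto the foliations $W^{ss}, W^{uu}$ of $\varphi_t$ downstairs. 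As smooth lifts of absolutely continuous foliations under an isometric extension, $\wt W^{ss}$ and $\wt W^{uu}$ are themselves absolutely continuous.

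First I would run a Hopf-type argument upstairs. Let $u \in L^2(FM,\mu)$ with $\X u = 0$, i.e. $u \circ \Phi_t = u$ for all $t$. Approximating by continuous functions and using that forward (resp. backward) Birkhoff averages exist a.e., are flow-invariant, and are constant along $\wt W^{ss}$ (resp. $\wt W^{uu}$), the absolute continuity of the two foliations forces $u$ to be $\mu$-a.e. constant along $\wt W^{ss}$, along $\wt W^{uu}$, and along the flow. Next, fix a base frame $w_0 \in FM$ over $v_0 \in SM$ and define the \emph{transitivity group} $H \leqslant G$ as the closure of the set of holonomies obtained by following closed \emph{su-paths} based at $w_0$ (finite concatenations of strong-stable, strong-unstable and flow segments returning to the fiber over $v_0$, the closing defect measured in $G$); one checks directly that $H$ is a closed subgroup. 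A function constant along $\wt W^{ss}, \wt W^{uu}$ and the flow is invariant under every su-holonomy, hence under the right $H$-action on the fibers, and by su-transitivity of the Anosov geodesic flow (a.e. two points of $SM$ are joined by an su-path) it is determined by a single right-$H$-invariant function on the fiber $G$, that is, by an element of $L^2(H\backslash G)$.

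This correspondence \emph{is} the identification $\Psi$: to $f \in L^2(H\backslash G)$ one assigns the $\Phi_t$-invariant function $\Psi f(w) := f(\sigma(w))$, where $\sigma : FM \to H\backslash G$ is the ($\Phi_t$-, $\wt W^{ss}$- and $\wt W^{uu}$-invariant) labelling of su-equivalence classes normalized by $\sigma(w_0)=H$; the two constructions are mutually inverse, yielding the claimed isomorphism onto $L^2(FM,\mu)\cap\ker\X$. The dichotomy ``ergodic $\iff H=G$'' is then immediate, since $L^2(FM,\mu)\cap\ker\X \cong L^2(H\backslash G)$ reduces to the constants precisely when $H\backslash G$ is a point, i.e. $H = G$.

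The main obstacle, and the genuinely delicate point, is the final smoothness statement: if $f \in C^\infty(H\backslash G)$ then $\Psi f \in C^\infty(FM)$. Here one must upgrade the a.e.-defined labelling $\sigma$ to a \emph{smooth} $\Phi_t$-invariant map $\sigma : FM \to H\backslash G$, equivalently produce a smooth $\Phi_t$-invariant reduction of $FM\to SM$ to an $H$-subbundle. Since $\sigma$ is constant (hence smooth) along the flow and along $\wt W^{ss}, \wt W^{uu}$, the difficulty is purely transverse regularity: one combines the leafwise smoothness of the lifted foliations with Journé's lemma (joint regularity from separate regularity along a transverse, spanning pair of foliations) and the smooth dependence of the su-holonomies defining $H$. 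Because the stable and unstable foliations are only Hölder transversally, this is exactly where the work concentrates; I would follow the regularity bootstrap of \cite{Lefeuvre-21} for the invariant distributions, which delivers $\Psi f \in C^\infty(FM)$ and completes the proof.
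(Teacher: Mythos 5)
First, a structural point: this paper does not prove Theorem \ref{theorem:lefeuvre} at all. It is imported, with citation, from \cite{Lefeuvre-21} (which builds on Brin's theory of compact group extensions of Anosov flows), and is then used as a black box in \S\ref{section:applications}. So there is no internal proof to compare your attempt against; what can be assessed is whether your sketch would stand on its own, or whether it faithfully reconstructs the cited proof.

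Your overall strategy --- viewing $FM \to SM$ as a principal $\mathrm{SO}(n-1)$-extension of the Anosov geodesic flow, running a Hopf argument along the lifted strong stable/unstable foliations, defining $H$ as the closure of the su-holonomies, and identifying flow-invariant functions with $L^2(H\backslash \mathrm{SO}(n-1))$ --- is indeed the Brin-type route that \cite{Lefeuvre-21} follows. But as a self-contained proof it has two genuine gaps. First, the passage from ``$u$ is $\mu$-a.e.\ constant along $\wt{W}^{ss}$, $\wt{W}^{uu}$ and the flow'' to ``$u$ factors through a well-defined labelling $\sigma : FM \to H\backslash G$'' is precisely Brin's structure theorem: one must show that the closure of the su-accessibility class of $w_0$ is a flow-invariant principal $H$-subbundle $Q \subset FM$, that $FM$ is partitioned by the translates of $Q$, parametrized by $H\backslash G$, and that an a.e.-defined, leafwise-constant $L^2$ function is (after modification on a null set) constant on each such translate --- an absolute-continuity/Fubini argument that you assert (``one checks directly'', ``is determined by'') rather than prove. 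Second, and more seriously, the smoothness statement --- the part of the theorem this paper actually needs, since the application in \S\ref{section:applications} feeds $\Psi f$ into the Pestov identity \eqref{equation:global-pestov-frame}, which requires $H^2$ regularity --- is exactly where you write that you ``would follow the regularity bootstrap of \cite{Lefeuvre-21}''. That is circular: you are invoking the very paper whose theorem you are proving. A genuine argument would need to establish, for instance, that the subbundle $Q$ is smooth (via Journ\'e's lemma applied transversally to the smooth leafwise data, or via microlocal regularity of flow-invariant distributions); you name these tools but carry out neither, so the most delicate claim of the theorem is left unproved.
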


\begin{example}[Frame flow ergodicity on hyperbolic manifolds]
It follows immediately from the global Pestov identity \eqref{equation:global-pestov-frame}, Example \ref{example:curvature} and Theorem \ref{theorem:lefeuvre} that the frame flow is ergodic on hyperbolic manifolds. Indeed, assume that it is not ergodic; then there exists a smooth non-constant function $u \in C^\infty(FM) \cap \ker \X $. Applying \eqref{equation:global-pestov-frame} together with Example \ref{example:curvature}, we obtain
\[
-\|\X\nabla_{\V}u\|^2_{L^2(FM, \Lambda^2 \mathbb{R}^n)} = -\langle R_{FM}\nabla_{\V}u,\nabla_{\V}u\rangle_{L^2(FM)} = \|\nabla_{\V}u \cdot\e_1\|^2_{L^2(FM)}.
\]
Hence, $\nabla_{\V}u \cdot \e_1 = 0 = \X \nabla_{\V} u$. By \eqref{equation:nablah}, we also get $\nabla_{\HH} u = 0$, that is $B_{\e_j}u = 0$ for $2 \leq j \leq n$. Since $0=\X u= B_{\e_1}u$, this yields $B_{\e_j} u = 0$ for all $1 \leq j \leq n$. Moreover, observe that by \eqref{equation:commutateur}, 
\[
[B_{\e_j},B_{\e_{j'}}] = - [w^{-1}\mc{R}_{\pi_{FM}(w)}(w(\e_j) \wedge w(\e_{j'}))]^{\V} = - (\e_j \wedge \e_{j'})^{\V},
\]
and we thus get $ (\e_j \wedge \e_{j'})^{\V} u = 0$ for all $1 \leq j,j' \leq n$. 

Since the vector fields $(\e_j \wedge \e_{j'})=(Y_{ij})_{i < j}$ span all the vertical directions, and the vector fields $(B_{\e_j})_{1 \leq j \leq n}$ span all the horizontal directions, we deduce that $\dd u = 0$, that is, $u$ is constant. More generally, this argument still works for metrics with $\delta$-pinched sectional curvature for $\delta$ close enough to $1$.
\end{example}

\bibliographystyle{alpha}
\bibliography{Biblio}

@book{Paternain-Salo-Uhlmann-23,
    place={Cambridge},
    series={Cambridge Studies in Advanced Mathematics},
    title={Geometric Inverse Problems: With Emphasis on Two Dimensions},
    DOI={10.1017/9781009039901},
    publisher={Cambridge University Press},
    author={Paternain, Gabriel P. and Salo, Mikko and Uhlmann, Gunther},
    year={2023},
    collection={Cambridge Studies in Advanced Mathematics}}

@Article{Paternain-Salo-Uhlmann-15,
 Author = {Paternain, Gabriel P. and Salo, Mikko and Uhlmann, Gunther},
 Title = {Invariant distributions, {Beurling} transforms and tensor tomography in higher dimensions},
 FJournal = {Mathematische Annalen},
 Journal = {Math. Ann.},
 ISSN = {0025-5831},
 Volume = {363},
 Number = {1-2},
 Pages = {305--362},
 Year = {2015},
 Language = {English},
 DOI = {10.1007/s00208-015-1169-0},
 Keywords = {53C65,44A12},
 URL = {urn.fi/URN:NBN:fi:jyu-201611184674},
 zbMATH = {6488207},
 Zbl = {1328.53099}
}

@Article{Cleyton-Moroianu-Semmelmann-21,
 Author = {Cleyton, Richard and Moroianu, Andrei and Semmelmann, Uwe},
 Title = {Metric connections with parallel skew-symmetric torsion},
 FJournal = {Advances in Mathematics},
 Journal = {Adv. Math.},
 ISSN = {0001-8708},
 Volume = {378},
 Pages = {107519},
 Year = {2021},
 Language = {English},
 DOI = {10.1016/j.aim.2020.107519},
 Keywords = {53B05,53C25},
 zbMATH = {7298470},
 Zbl = {1483.53018}
}

@article {Cekic-Lefeuvre-Moroianu-Semmelmann-22,
    AUTHOR = {Ceki\'{c}, Mihajlo and Lefeuvre, Thibault and Moroianu, Andrei and
              Semmelmann, Uwe},
     TITLE = {Towards {B}rin's conjecture on frame flow ergodicity: new
              progress and perspectives},
   JOURNAL = {Math. Res. Rep.},
  FJOURNAL = {Mathematics Research Reports},
    VOLUME = {3},
      YEAR = {2022},
     PAGES = {21--34},
   MRCLASS = {37D40 (37A05 37A20 37A25)},
  MRNUMBER = {4450659},
}

@article{Cekic-Lefeuvre-Moroianu-Semmelmann-24-2,
 author = {Ceki{\'c}, Mihajlo and Lefeuvre, Thibault and Moroianu, Andrei and Semmelmann, Uwe},
 title = {On the ergodicity of unitary frame flows on {K{\"a}hler} manifolds},
 fjournal = {Ergodic Theory and Dynamical Systems},
 journal = {Ergodic Theory Dyn. Syst.},
 issn = {0143-3857},
 volume = {44},
 number = {8},
 pages = {2143--2172},
 year = {2024},
 language = {English},
 doi = {10.1017/etds.2023.72},
 keywords = {37D30,37D40,37A15,37A25,32Q15},
 zbMATH = {7932136},
 Zbl = {1555.37038}
}

@article{Cekic-Lefeuvre-Moroianu-Semmelmann-25,
 author = {Ceki{\'c}, Mihajlo and Lefeuvre, Thibault and Moroianu, Andrei and Semmelmann, Uwe},
 title = {Correspondence between {Pestov} and {Weitzenb{\"o}ck} identities},
 fjournal = {Mathematical Proceedings of the Cambridge Philosophical Society},
 journal = {Math. Proc. Camb. Philos. Soc.},
 issn = {0305-0041},
 volume = {178},
 number = {3},
 pages = {443--463},
 year = {2025},
 language = {English},
 doi = {10.1017/S0305004125000210},
 keywords = {53B20,58J60,53C65},
 zbMATH = {8051275}
}

@article {Mukhometov-75,
    AUTHOR = {Mukhometov, Ravil Galatdinovich},
     TITLE = {Inverse kinematic problem of seismic on the plane},
   JOURNAL = {Akad. Nauk. SSSR},
  FJOURNAL = {Akad. Nauk. SSSR},
    VOLUME = {6},
      YEAR = {1975},
     PAGES = {243--252},
      ISSN = {0021-2172},
   MRCLASS = {37D30 (37A25 37C20 37C40)},
  MRNUMBER = {1919377},
MRREVIEWER = {Boris Hasselblatt},
       DOI = {10.1007/BF02764076},
       URL = {https://doi.org/10.1007/BF02764076},
}

@article {Mukhometov-81,
    AUTHOR = {Mukhometov, Ravil Galatdinovich},
     TITLE = {On a problem of reconstructing {R}iemannian metrics},
   JOURNAL = {Sibirsk. Mat. Zh.},
  FJOURNAL = {Akademiya Nauk SSSR. Sibirskoe Otdelenie. Sibirski\u{\i}\
              Matematicheski\u{\i} Zhurnal},
    VOLUME = {22},
      YEAR = {1981},
    NUMBER = {3},
     PAGES = {119--135, 237},
      ISSN = {0037-4474},
   MRCLASS = {53C65 (35R30 53C21 58D17 86A15)},
  MRNUMBER = {621466},
MRREVIEWER = {Alan West},
}

@article {Amirov-86,
    AUTHOR = {Amirov, Arif},
     TITLE = {Existence and uniqueness theorems for the solution of an
              inverse problem for the transfer equation},
   JOURNAL = {Sibirsk. Mat. Zh.},
  FJOURNAL = {Akademiya Nauk SSSR. Sibirskoe Otdelenie. Sibirski\u{\i}
              Matematicheski\u{\i} Zhurnal},
    VOLUME = {27},
      YEAR = {1986},
    NUMBER = {6},
     PAGES = {3--20},
      ISSN = {0037-4474},
   MRCLASS = {35R30 (35Q20 53C65)},
  MRNUMBER = {883577},
MRREVIEWER = {Alexander Doktor},
}

@article {Pestov-Sharafutdinov-88,
    AUTHOR = {Pestov, Leonid N. and Sharafutdinov, Vladimir A.},
     TITLE = {Integral geometry of tensor fields on a manifold of negative
              curvature},
   JOURNAL = {Sibirsk. Mat. Zh.},
  FJOURNAL = {Akademiya Nauk SSSR. Sibirskoe Otdelenie. Sibirski\u{\i}
              Matematicheski\u{\i} Zhurnal},
    VOLUME = {29},
      YEAR = {1988},
    NUMBER = {3},
     PAGES = {114--130, 221},
      ISSN = {0037-4474},
   MRCLASS = {53C65 (86A15)},
  MRNUMBER = {953028},
MRREVIEWER = {Z. N\'{a}den\'{\i}k},
       DOI = {10.1007/BF00969652},
       URL = {https://doi.org/10.1007/BF00969652},
}

@book {Sharafutdinov-94,
    AUTHOR = {Sharafutdinov, Vladimir A.},
     TITLE = {Integral geometry of tensor fields},
    SERIES = {Inverse and Ill-posed Problems Series},
 PUBLISHER = {VSP, Utrecht},
      YEAR = {1994},
     PAGES = {271},
      ISBN = {90-6764-165-0},
   MRCLASS = {53C65 (44A12 73D50)},
  MRNUMBER = {1374572},
MRREVIEWER = {Eric Grinberg},
       DOI = {10.1515/9783110900095},
       URL = {https://doi.org/10.1515/9783110900095},
}

@book {Kobayashi-Nomizu-96,
    AUTHOR = {Kobayashi, Shoshichi and Nomizu, Katsumi},
     TITLE = {Foundations of differential geometry. {V}ol. {I}},
    SERIES = {Wiley Classics Library},
      NOTE = {Reprint of the 1963 original,
              A Wiley-Interscience Publication},
 PUBLISHER = {John Wiley \& Sons, Inc., New York},
      YEAR = {1996},
     PAGES = {xii+329},
      ISBN = {0-471-15733-3},
   MRCLASS = {53-01},
  MRNUMBER = {1393940},
}

@article{Cekic-Lefeuvre-Moroianu-Semmelmann-24,
 author = {Ceki{\'c}, Mihajlo and Lefeuvre, Thibault and Moroianu, Andrei and Semmelmann, Uwe},
 title = {On the ergodicity of the frame flow on even-dimensional manifolds},
 fjournal = {Inventiones Mathematicae},
 journal = {Invent. Math.},
 issn = {0020-9910},
 volume = {238},
 number = {3},
 pages = {1067--1110},
 year = {2024},
 language = {English},
 doi = {10.1007/s00222-024-01297-7},
 keywords = {37D40,37A25,37D30,53C55},
 zbMATH = {7953347},
 Zbl = {1561.37042}
}

@article {Brin-75-1,
    AUTHOR = {Brin, Michael},
     TITLE = {The topology of group extensions of {$C$}-systems},
   JOURNAL = {Mat. Zametki},
  FJOURNAL = {Akademiya Nauk SSSR. Matematicheskie Zametki},
    VOLUME = {18},
      YEAR = {1975},
    NUMBER = {3},
     PAGES = {453--465},
      ISSN = {0025-567X},
   MRCLASS = {58F15},
  MRNUMBER = {394764},
MRREVIEWER = {L. A. Bunimovich},
}

@article {Brin-75-2,
    AUTHOR = {Brin, Michael},
     TITLE = {Topological transitivity of a certain class of dynamical
              systems, and flows of frames on manifolds of negative
              curvature},
   JOURNAL = {Funkcional. Anal. i Prilo\v{z}en.},
  FJOURNAL = {Akademija Nauk SSSR. Funkcional\cprime nyi Analiz i ego Prilo\v{z}enija},
    VOLUME = {9},
      YEAR = {1975},
    NUMBER = {1},
     PAGES = {9--19},
      ISSN = {0374-1990},
   MRCLASS = {58F15},
  MRNUMBER = {0370660},
MRREVIEWER = {A. Morimoto},
}

@incollection {Brin-82,
    AUTHOR = {Brin, Michael},
     TITLE = {Ergodic theory of frame flows},
 BOOKTITLE = {Ergodic theory and dynamical systems, {II} ({C}ollege {P}ark,
              {M}d., 1979/1980)},
    SERIES = {Progr. Math.},
    VOLUME = {21},
     PAGES = {163--183},
 PUBLISHER = {Birkh\"{a}user, Boston, Mass.},
      YEAR = {1982},
   MRCLASS = {58F17 (28D99 58F11)},
  MRNUMBER = {670078},
MRREVIEWER = {M. Rees},
}

@article {Brin-Gromov-80,
    AUTHOR = {Brin, Michael and Gromov, Mikhael},
     TITLE = {On the ergodicity of frame flows},
   JOURNAL = {Invent. Math.},
  FJOURNAL = {Inventiones Mathematicae},
    VOLUME = {60},
      YEAR = {1980},
    NUMBER = {1},
     PAGES = {1--7},
      ISSN = {0020-9910},
   MRCLASS = {58F17 (58F11)},
  MRNUMBER = {582702},
MRREVIEWER = {Gudlaugur Thorbergsson},
       DOI = {10.1007/BF01389897},
       URL = {https://doi-org.revues.math.u-psud.fr/10.1007/BF01389897},
}

@article {Brin-Karcher-84,
    AUTHOR = {Brin, Michael and Karcher, Hermann},
     TITLE = {Frame flows on manifolds with pinched negative curvature},
   JOURNAL = {Compositio Math.},
  FJOURNAL = {Compositio Mathematica},
    VOLUME = {52},
      YEAR = {1984},
    NUMBER = {3},
     PAGES = {275--297},
      ISSN = {0010-437X},
   MRCLASS = {58F17 (53C22 58F15)},
  MRNUMBER = {756723},
MRREVIEWER = {Carmen Chicone},
       URL = {http://www.numdam.org.revues.math.u-psud.fr:2048/item?id=CM_1984__52_3_275_0},
}

@article {Burns-Pollicott-03,
    AUTHOR = {Burns, Keith and Pollicott, Mark},
     TITLE = {Stable ergodicity and frame flows},
   JOURNAL = {Geom. Dedicata},
  FJOURNAL = {Geometriae Dedicata},
    VOLUME = {98},
      YEAR = {2003},
     PAGES = {189--210},
      ISSN = {0046-5755},
   MRCLASS = {37D40 (37A25 37D30)},
  MRNUMBER = {1988429},
MRREVIEWER = {Gerhard Knieper},
       DOI = {10.1023/A:1024057924334},
       URL = {https://doi-org.revues.math.u-psud.fr/10.1023/A:1024057924334},
}

@article {Croke-Sharafutdinov-98,
    AUTHOR = {Croke, Christopher B. and Sharafutdinov, Vladimir A.},
     TITLE = {Spectral rigidity of a compact negatively curved manifold},
   JOURNAL = {Topology},
  FJOURNAL = {Topology. An International Journal of Mathematics},
    VOLUME = {37},
      YEAR = {1998},
    NUMBER = {6},
     PAGES = {1265--1273},
      ISSN = {0040-9383},
   MRCLASS = {58G25 (53C20)},
  MRNUMBER = {1632920},
MRREVIEWER = {Carolyn Gordon},
       DOI = {10.1016/S0040-9383(97)00086-4},
       URL = {https://doi-org.revues.math.u-psud.fr/10.1016/S0040-9383(97)00086-4},
}

@article {Dairbekov-Sharafutdinov-03,
    AUTHOR = {Dairbekov, Nurlan S. and Sharafutdinov, Vladimir A.},
     TITLE = {Some problems of integral geometry on {A}nosov manifolds},
   JOURNAL = {Ergodic Theory Dynam. Systems},
  FJOURNAL = {Ergodic Theory and Dynamical Systems},
    VOLUME = {23},
      YEAR = {2003},
    NUMBER = {1},
     PAGES = {59--74},
      ISSN = {0143-3857},
   MRCLASS = {58J53 (37D20 37D40 53C65 53D25)},
  MRNUMBER = {1971196},
MRREVIEWER = {Dorothee Schueth},
       DOI = {10.1017/S0143385702000822},
       URL = {https://doi-org.revues.math.u-psud.fr/10.1017/S0143385702000822},
}

@article{Egidi-16,
 author = {Egidi, Michela},
 title = {Pestov's identity on frame bundles and applications},
 fjournal = {Mathematical Proceedings of the Cambridge Philosophical Society},
 journal = {Math. Proc. Camb. Philos. Soc.},
 issn = {0305-0041},
 volume = {161},
 number = {2},
 pages = {357--377},
 year = {2016},
 language = {English},
 doi = {10.1017/S0305004116000281},
 keywords = {58A30},
 zbMATH = {6782204},
 Zbl = {1371.58002}
}

@incollection{Knieper-02,
 author = {Knieper, Gerhard},
 title = {Hyperbolic dynamics and {Riemannian} geometry},
 booktitle = {Handbook of dynamical systems. Volume 1A},
 isbn = {0-444-82669-6},
 pages = {453--545},
 year = {2002},
 publisher = {Amsterdam: North-Holland},
 language = {English},
 keywords = {37D40,53C22,37-02,37D25,53C24,37A35},
 zbMATH = {2079134},
 Zbl = {1049.37020}
}

@article {Guillarmou-Lefeuvre-18,
    AUTHOR = {Guillarmou, Colin and Lefeuvre, Thibault},
     TITLE = {The marked length spectrum of {A}nosov manifolds},
   JOURNAL = {Ann. of Math. (2)},
  FJOURNAL = {Annals of Mathematics. Second Series},
    VOLUME = {190},
      YEAR = {2019},
    NUMBER = {1},
     PAGES = {321--344},
      ISSN = {0003-486X},
   MRCLASS = {53C24 (37C27 37D40 53C22)},
  MRNUMBER = {3990606},
       DOI = {10.4007/annals.2019.190.1.6},
       URL = {https://doi-org.revues.math.u-psud.fr/10.4007/annals.2019.190.1.6},
}

@article {Guillarmou-Paternain-Salo-Uhlmann-16,
    AUTHOR = {Guillarmou, Colin and Paternain, Gabriel P. and Salo, Mikko
              and Uhlmann, Gunther},
     TITLE = {The {X}-ray transform for connections in negative curvature},
   JOURNAL = {Commun. Math. Phys.},
  FJOURNAL = {Communications in Mathematical Physics},
    VOLUME = {343},
      YEAR = {2016},
    NUMBER = {1},
     PAGES = {83--127},
      ISSN = {0010-3616},
   MRCLASS = {53C65 (53C21)},
  MRNUMBER = {3475662},
MRREVIEWER = {Leonard Todjihounde},
       DOI = {10.1007/s00220-015-2510-x},
       URL = {https://doi-org.revues.math.u-psud.fr/10.1007/s00220-015-2510-x},
}

@article {Guillemin-Kazhdan-80,
    AUTHOR = {Guillemin, Victor and Kazhdan, David},
     TITLE = {Some inverse spectral results for negatively curved
              {$2$}-manifolds},
   JOURNAL = {Topology},
  FJOURNAL = {Topology. An International Journal of Mathematics},
    VOLUME = {19},
      YEAR = {1980},
    NUMBER = {3},
     PAGES = {301--312},
      ISSN = {0040-9383},
   MRCLASS = {58G25 (53C20)},
  MRNUMBER = {579579},
MRREVIEWER = {H. R. Gluck},
       DOI = {10.1016/0040-9383(80)90015-4},
       URL = {https://doi.org/10.1016/0040-9383(80)90015-4},
}

@incollection{Lefeuvre-book,
    AUTHOR = {Lefeuvre, Thibault},
     TITLE = {Microlocal analysis in hyperbolic dynamics and geometry},
 booktitle = {Cours spécialisés de la SMF},
publisher = {To appear},
      YEAR = {2025},
}

@article{Lefeuvre-21,
 author = {Lefeuvre, Thibault},
 title = {Isometric extensions of {Anosov} flows via microlocal analysis},
 fjournal = {Communications in Mathematical Physics},
 journal = {Commun. Math. Phys.},
 issn = {0010-3616},
 volume = {399},
 number = {1},
 pages = {453--479},
 year = {2023},
 language = {English},
 doi = {10.1007/s00220-022-04561-0},
 keywords = {37D25,37D20,37C40,37A25},
 zbMATH = {7673003},
 Zbl = {1518.37044}
}

@book {Paternain-99,
    AUTHOR = {Paternain, Gabriel P.},
     TITLE = {Geodesic flows},
    SERIES = {Progress in Mathematics},
    VOLUME = {180},
 PUBLISHER = {Birkh\"{a}user Boston, Inc., Boston, MA},
      YEAR = {1999},
     PAGES = {xiv+149},
      ISBN = {0-8176-4144-0},
   MRCLASS = {53D25 (37D40 37J99)},
  MRNUMBER = {1712465},
MRREVIEWER = {Boris Hasselblatt},
       DOI = {10.1007/978-1-4612-1600-1},
       URL = {https://doi-org.revues.math.u-psud.fr/10.1007/978-1-4612-1600-1},
}

@article {Paternain-Salo-Uhlmann-13,
    AUTHOR = {Paternain, Gabriel P. and Salo, Mikko and Uhlmann, Gunther},
     TITLE = {Tensor tomography on simple surfaces},
   JOURNAL = {Invent. Math.},
  FJOURNAL = {Inventiones Mathematicae},
    VOLUME = {193},
      YEAR = {2013},
    NUMBER = {1},
     PAGES = {229--247},
      ISSN = {0020-9910},
   MRCLASS = {53C22 (53C65)},
  MRNUMBER = {3069117},
MRREVIEWER = {V. K. Ohanyan},
       DOI = {10.1007/s00222-012-0432-1},
       URL = {https://doi-org.revues.math.u-psud.fr/10.1007/s00222-012-0432-1},
}

@article {Pestov-Uhlmann-05,
    AUTHOR = {Pestov, Leonid and Uhlmann, Gunther},
     TITLE = {Two dimensional compact simple {R}iemannian manifolds are
              boundary distance rigid},
   JOURNAL = {Ann. of Math. (2)},
  FJOURNAL = {Annals of Mathematics. Second Series},
    VOLUME = {161},
      YEAR = {2005},
    NUMBER = {2},
     PAGES = {1093--1110},
      ISSN = {0003-486X},
   MRCLASS = {53C24 (53C22)},
  MRNUMBER = {2153407},
MRREVIEWER = {Rodney Josu\'{e} Biezuner},
       DOI = {10.4007/annals.2005.161.1093},
       URL = {https://doi-org.revues.math.u-psud.fr/10.4007/annals.2005.161.1093},
}

\end{document}